\def\ints{{\mathbb Z}}
\def\rats{{\mathbb Q}}
\def\reals{{\mathbb R}}
\def\proj{{\mathbb P}}
\def\FF{{\mathbb F}}
\def\Frac{{\text{Frac}}}
\def\Gal{{\text{Gal}}}
\def\ol#1{\overline{#1}}
\newtheorem{theorem}{Theorem}[section]
\newtheorem{prop}[theorem]{Proposition}
\newtheorem{lemma}[theorem]{Lemma}
\newtheorem{corollary}[theorem]{Corollary}
\newtheorem{predefinition}[theorem]{Definition}
\newenvironment{definition}{\begin{predefinition}\rm}{\end{predefinition}}
\newtheorem{preremark}[theorem]{Remark}
\numberwithin{equation}{section}
\begin{document}

\title[Conductors of wild extensions]{Conductors of wild extensions of local fields, especially in mixed characteristic $(0,2)$}

\author{Andrew Obus}
\address{Columbia University, Department of Mathematics, MC4403, 2990 Broadway, New York, NY 10027}
\email{obus@math.columbia.edu}
\thanks{The author was supported by an NSF Postdoctoral 
Research Fellowship in the Mathematical Sciences.  Final preparation of this paper took place at the Max-Planck-Institut f\"{u}r Mathematik in Bonn}
\subjclass[2000]{Primary: 11S15, 11S20; Secondary: 11R18, 11R20, 12F05, 12F10}

\date{\today}

\keywords{higher ramification groups, local fields, conductor, cyclotomic extensions}

\begin{abstract}
If $K_0$ is the fraction field of the Witt vectors over an algebraically closed field $k$ of characteristic $p$, we calculate upper bounds on the
conductor of higher ramification for (the Galois closure of) extensions of the form 
$K_0(\zeta_{p^c}, \sqrt[p^c]{a})/K_0$, where $a \in K_0(\zeta_{p^c})$.  Here $\zeta_{p^c}$ is a 
primitive $p^c$th root of unity.  In certain cases, including when $a \in K_0$ and $p=2$, we calculate the conductor exactly.  These calculations can
be used to determine the discriminants of various extensions of $\rats$ obtained by adjoining roots of unity and radicals.
\end{abstract}

\maketitle
The purpose of this paper is to study the higher ramification filtrations of certain wild extensions of discrete valuation fields.  
Let $k$ be an algebraically closed field of characteristic $p > 0$. 
We set $K_0 = \Frac(W(k))$ and $K_c = K_0(\zeta_{p^c})$, where $\zeta_n$ means a primitive $n$th root of unity.  
The main result is the determination
of the higher ramification groups for Galois extensions of the form $K_c(\sqrt[2^c]{a})/K_0$, where $c \geq 1$, $p=2$, and $a \in K_0$ 
(Theorem \ref{Tmain}).  In fact, we do not
explicitly calculate all of the higher ramification groups, but rather the conductor of the extension, which is the highest index for which there exists
a nontrivial higher ramification group for the upper numbering.  Since the subextensions of extensions of this type have a form resembling
that of the original extension, one can 
calculate the conductors of all the subextensions as well.  One can then use properties of the higher ramification groups to show that 
this is enough to calculate all of the higher ramification groups
of $K_c(\sqrt[2^c]{a})/K_0$ (Proposition \ref{Pconductorenough} and the introduction to \S\ref{Schar2}), 
which is in turn enough to calculate the different and discriminant (\cite[IV, Proposition 4 and VI, \S3, Corollary 2]{Se:lf}).  

Additionally, we calculate an upper bound on the conductor of (the Galois closure of)
any extension of the form $K = K_c(\sqrt[p^c]{a})/K_0$, where $p$ is arbitrary and
$a \in K_c$, but not necessarily in $K_0$ (Corollary \ref{C2specific}).  In certain situations, we get an exact value for the conductor 
(Proposition \ref{P2conductor}).  Our calculations in this more general situation are in fact used in the proof of Theorem \ref{Tmain} (in particular, part 
(iig)).  Our techniques are reminiscent of those used by Viviani in \cite{Vi:ra}, where the assumptions
are made that $a \in K_0$ and $p$ is odd.  The main idea is to focus on what we call \emph{$p$-primitive elements} of a mixed characteristic
discrete valuation field (Definition \ref{Dprimitive}).
Extensions obtained by taking roots of such elements are particularly amenable to having their higher ramification groups determined.
We then proceed by writing $K$ as the compositum of extensions coming from roots of $p$-primitive elements and roots of unity, and using
theorems about how higher ramification groups behave under taking the compositum (Lemmas \ref{Lcompositum}, \ref{Lcompositumexact}).

We have two main motivations.  The first comes from \cite{Vi:ra}.  In it, Viviani calculates the higher ramification groups away from $2$ of
all Galois extensions $\rats(\zeta_m, \sqrt[m]{a})/\rats$, 
so long as $m$ is odd and $a \in \rats$ satisfies a technical condition.   He is able to reduce this to
the study of the extensions $\rats_p(\zeta_{p^c}, \sqrt[p^c]{a})/\rats_p$, where $p$ is odd and the $p$-valuation of $a \in \rats_p$ is either prime to $p$ 
or divisible by $p^c$ (hereafter, the ``valuation condition").  
Of course, one can make a base change to the maximal unramified extension $\rats_p^{ur}$ of $\rats_p$ without changing the 
higher ramification groups.  Furthermore, since we are studying algebraic extensions, there is no harm in making a further base change to the 
completion $C$ of
$\rats_p^{ur}$.  We note that, if $k = \ol{\FF_p}$, then $K_0 = C$.  Thus, the calculation of the higher ramification groups in \cite{Vi:ra}
is equivalent to calculating the higher ramification groups of $K_c(\sqrt[p^c]{a})/K_0$ when $p$ is odd, $k = \ol{\FF_p}$, and $a \in K_0$ satisfies the
valuation condition.  Naturally, one would like a similar result when $p=2$, which is what Theorem \ref{Tmain} provides.
Furthermore, we need no valuation condition on $a$ when $p=2$, 
although we are unfortunately not able to eliminate the valuation condition when $p$ is odd.

The second motivation comes from \cite{Ob:fm} and \cite{Ob:fm2}.  
Let $f:Y \to \proj^1$ be a $G$-Galois cover of $\proj^1$ branched at $0$, $1$, and $\infty$, a priori defined over the algebraic closure of $K_0$.
If a $p$-Sylow subgroup of $G$ is of order $p$, then it turns out that $f$ can in fact be defined over a tame extension of $K_0$ (\cite{We:br}).
However, if a $p$-Sylow subgroup of $G$ is cyclic of order $p^r$, 
then the best that can be proven at the moment, especially when $p$ is small, is that $f$ can often be defined over a field of the 
form $K_c(\sqrt[p^c]{a})/K_0$, where $a \in K_c$.  In fact, even the stable model of $f$ can often be defined over such an 
extension.  The bounds that we calculate on the conductors of these extensions (in particular, Corollary \ref{Ccruder}) 
are sufficient to yield aesthetically pleasing statements
of the form ``smaller cyclic $p$-Sylow subgroups lead to smaller conductors of the minimal field of definition over $K_0$") (see \cite[Theorem 1.3]{Ob:fm}
and \cite[Theorem 1.1]{Ob:fm2} for the specific statements).

After some basic results on how higher ramification groups act under compositums and towers of field extensions (\S\ref{Shigher} and \S\ref{Stowers}),
we study the ramification behavior of prime order Kummer extensions and introduce the concept of $p$-primitive elements (\S\ref{Sprime}).  
The technical heart
of the paper is \S\ref{Smain}, where we study the conductor of an extension $K_c(\sqrt[p^c]{a})/K_0$ for $a \in K_0$ by breaking this extension up 
into extensions involving only roots of unity (well understood by \cite{Se:lf}) and prime order Kummer extensions.  We put everything together in 
\S\ref{Schar2} to prove Theorem \ref{Tmain}.

\section*{Acknowledgements}
I thank the referee for useful comments to improve the exposition.  
This work began as an appendix to \cite{Ob:fm}.  I thank the anonymous referee of \cite{Ob:fm} for suggesting that it
be turned into its own paper.  

\section*{Conventions}
Throughout this paper, the valuation $v_K$ on any discrete valuation field $K$ is normalized so that the valuation of a uniformizer is
$1$. 

\section{Higher ramification filtrations}\label{Shigher} 
We state here some facts from \cite[IV]{Se:lf}.
Let $K$ be a complete discrete valuation field with residue field $k$.  If $L/K$ is a finite Galois
extension of fields with Galois group $G$, then the group $G$ has a filtration $G  = G_0
\geq G_i$ ($i \in \reals_{\geq 0}$) for the lower numbering.  If $\pi_L$ is a uniformizer of $L$, this filtration is given by
$$g \in G_i \Leftrightarrow v_L(g\pi_L - \pi_L) \geq i + 1.$$  
There is also a filtration $G = G^0 \geq G^i$ ($i \in \reals \geq 0$) for the upper numbering, defined by 
$G^i = G_{\psi_{L/K}(i)}$, where $\psi_{L/K}: [0, \infty) \to [0, \infty)$ is a certain monotonically increasing, piecewise linear function (\cite[IV, \S3]{Se:lf}).  
The inverse of $\psi_{L/K}$ is denoted $\phi_{L/K}$.  Clearly
$G^{\phi_{L/K}(j)} = G_j$ for $j \in [0, \infty)$.   
The subgroup $G_i$ (resp.\ $G^i$) is known as the \emph{$i$th higher ramification group for
the lower numbering (resp.\ the upper numbering)}.  If $H \leq G$, and $M = L^H$, then it follows from the definitions that 
then the $i$th higher ramification group $H_i$ for the lower numbering for $L/M$ is $G_i \cap H$.  If, furthermore, $H$ is normal, then
the $i$th higher ramification group $(G/H)^i$ for the upper numbering for $M/K$ is $G^i/(G^i \cap H) \leq G/H$ (\cite[IV, Proposition 14]{Se:lf}).
We say that the lower numbering is invariant under subgroups, whereas the upper numbering is invariant under quotients.

The \emph{conductor} of $L/K$, written $h_{L/K}$, is defined by
$$h_{L/K} = \sup_{i\in [0, \infty)}(G^i \neq \{id\})$$ (note that this differs by $1$ from the definition of \cite[p.\ 228]{Se:lf}).
The \emph{highest lower jump} of $L/K$, denoted $\ell_{L/K}$, is defined by
$$\ell_{L/K} = \sup_{i\in [0, \infty)}(G_i \neq \{id\}).$$  Of course, $\psi_{L/K}(h_{L/K}) = \ell_{L/K}$ and $\phi_{L/K}(\ell_{L/K}) = h_{L/K}$.  

The following lemma is easy (for a proof, see e.g. \cite[Lemma 2.3]{Ob:fm}).
\begin{lemma}\label{Lcompositum}
Given $L/K$ as above, let $M_1, \ldots, M_{\ell}$ be subfields of $L$ containing $K$ whose compositum is $L$.
Then $h_{L/K} = \max_i(h_{M_i/K})$.
\end{lemma}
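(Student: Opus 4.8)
The plan is to work entirely at the level of the Galois group $G = \Gal(L/K)$ and its upper-numbering filtration, translating the compositum condition into a statement about subgroups. Set $H_j = \Gal(L/M_j)$ for $1 \le j \le \ell$. By the Galois correspondence the fixed field of $\bigcap_j H_j$ is exactly the compositum $M_1 \cdots M_\ell$, so the hypothesis that this compositum equals $L$ is equivalent to $\bigcap_{j} H_j = \{id\}$.

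First I would record the two conductor quantities in group-theoretic form. By definition $h_{L/K} = \sup\{i : G^i \neq \{id\}\}$. For each subextension I claim $h_{M_j/K} = \sup\{i : G^i \not\subseteq H_j\}$. When $M_j/K$ is Galois this is immediate from quotient-compatibility of the upper numbering (\cite[IV, Proposition 14]{Se:lf}): there $(G/H_j)^i = G^i H_j / H_j$, which is nontrivial precisely when $G^i \not\subseteq H_j$. In the general (possibly non-Galois) case one takes this as the definition of $h_{M_j/K}$ and checks independence of the ambient field: for any Galois $L' \supseteq M_j$ with $N = \Gal(L'/L) \subseteq \Gal(L'/M_j) = H_j'$, compatibility gives $G^i = G'^i N/N$, so the image of $G'^i$ lands in $H_j'/N = H_j$ iff $G'^i \subseteq H_j' N = H_j'$, the last equality holding because $N \subseteq H_j'$.

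The core of the argument is the pointwise equivalence
$$ G^i \neq \{id\} \iff G^i \not\subseteq H_j \text{ for some } j. $$
The backward direction is trivial since $\{id\} \subseteq H_j$. For the forward direction, if $G^i \subseteq H_j$ held for every $j$, then $G^i \subseteq \bigcap_j H_j = \{id\}$, contradicting $G^i \neq \{id\}$. Writing $A = \{i \ge 0 : G^i \neq \{id\}\}$ and $B_j = \{i \ge 0 : G^i \not\subseteq H_j\}$, this equivalence says precisely $A = \bigcup_{j=1}^{\ell} B_j$.

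Finally I would take suprema. Because the union is finite, $\sup A = \sup \bigcup_j B_j = \max_j \sup B_j$, and by the formulas above this reads $h_{L/K} = \max_j h_{M_j/K}$, as desired. I expect the only genuine subtlety to be the justification of the conductor formula for $M_j/K$ when that extension is not Galois over $K$; once the filtration is transported to $G$ via \cite[IV, Proposition 14]{Se:lf}, everything reduces to the elementary set-theoretic identity $A = \bigcup_j B_j$ above, and the finiteness of $\ell$ makes the exchange of $\sup$ and $\max$ automatic.
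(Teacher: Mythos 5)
Your argument is correct and is essentially the proof the paper intends: the paper omits the proof and cites \cite[Lemma 2.3]{Ob:fm}, and that argument is exactly your reduction via quotient-compatibility of the upper numbering (\cite[IV, Proposition 14]{Se:lf}) to the observation that $\bigcap_j H_j = \{id\}$, followed by taking suprema over a finite union. Your extra care with possibly non-Galois $M_j$ (defining $h_{M_j/K}$ by the condition $G^i \not\subseteq H_j$ and checking independence of the ambient Galois extension) is sound and goes slightly beyond what the paper needs, since every application there has $M_j/K$ Galois, and since $G^i$ is normal in $G$ your definition coincides with the conductor of the Galois closure of $M_j$ in any case.
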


\begin{lemma}\label{Lcompositumexact}
Given $L/K$ as above, let $M_1$, $M_2$, and $M_3$ be subfields of $L$ containing $K$, the compositum of any two of which is $L$.  If
$h_{M_1/K} > h_{M_2/K}$, then $h_{M_3/K} = h_{M_1/K} = h_{L/K}$.
\end{lemma}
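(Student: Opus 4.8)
The plan is to deduce the statement directly from Lemma \ref{Lcompositum}, which computes the conductor of a compositum as the maximum of the conductors of its factors. The hypothesis that the compositum of \emph{any two} of $M_1$, $M_2$, $M_3$ equals $L$ supplies three separate instances of that lemma, and combining two of them with the strict inequality $h_{M_1/K} > h_{M_2/K}$ will pin down $h_{M_3/K}$.

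First I would apply Lemma \ref{Lcompositum} to the pair $(M_1, M_2)$. Since their compositum is $L$, this yields $h_{L/K} = \max(h_{M_1/K}, h_{M_2/K})$, and the hypothesis $h_{M_1/K} > h_{M_2/K}$ forces $h_{L/K} = h_{M_1/K}$. This already identifies $h_{L/K}$ with $h_{M_1/K}$, giving one of the two asserted equalities.

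Next I would apply Lemma \ref{Lcompositum} to the pair $(M_2, M_3)$, whose compositum is again $L$, obtaining $h_{L/K} = \max(h_{M_2/K}, h_{M_3/K})$. Combining this with the previous step gives $\max(h_{M_2/K}, h_{M_3/K}) = h_{M_1/K} > h_{M_2/K}$, so the maximum on the left cannot be attained by the $M_2$ term; it is therefore attained by the $M_3$ term, yielding $h_{M_3/K} = h_{L/K} = h_{M_1/K}$, which is the desired conclusion.

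There is essentially no serious obstacle here: the statement is a formal consequence of Lemma \ref{Lcompositum} together with the elementary behavior of a maximum under a strict inequality. The only point requiring a moment of care is the bookkeeping of which pair of subfields to feed into Lemma \ref{Lcompositum} at each stage. The first application must use the pair $(M_1, M_2)$ whose conductors are known to differ, and the second must use a pair \emph{containing} $M_3$ so that strictness propagates and forces $M_3$ to realize the maximum. Note that using the pair $(M_1, M_3)$ in the second step would only give $h_{M_1/K} = \max(h_{M_1/K}, h_{M_3/K})$, hence the weaker bound $h_{M_3/K} \leq h_{M_1/K}$; this is why the pair $(M_2, M_3)$ is the correct choice.
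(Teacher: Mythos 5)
Your proof is correct and follows exactly the paper's own argument: apply Lemma \ref{Lcompositum} first to the pair $(M_1, M_2)$ to get $h_{L/K} = h_{M_1/K}$, then to the pair $(M_2, M_3)$, where the strict inequality forces the maximum to be attained by $M_3$. Your remark about why the pair $(M_2, M_3)$ rather than $(M_1, M_3)$ is the right second application is a sound piece of bookkeeping that the paper leaves implicit.
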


\begin{proof}
By Lemma \ref{Lcompositum} applied to $M_1$ and $M_2$, we have $h_{L/K} = h_{M_1/K}$.  Then the same lemma, applied to
$M_2$ and $M_3$, implies that $h_{L/K} = h_{M_3/K}$.
\end{proof}

\begin{prop}\label{Pconductorenough}
Given $L/K$ a $G$-Galois extension as above, 
the higher ramification filtration is completely determined by knowing the conductor of each Galois extension $M/K$, where 
$K \subseteq M \subseteq L$.
\end{prop}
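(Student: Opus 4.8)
The plan is to reconstruct the entire filtration from the conductors of the intermediate Galois extensions, exploiting the fact (recalled in \S\ref{Shigher}) that the upper numbering is invariant under quotients. It suffices to determine the upper-numbering groups $G^i$ for every $i \in [0,\infty)$: these determine the whole filtration, since the lower numbering is recovered from them via the Herbrand function $\psi_{L/K}$, whose derivative $[G^0:G^i]$ is visible from the orders $|G^i|$. By the Galois correspondence, the fields $M$ with $K \subseteq M \subseteq L$ for which $M/K$ is Galois are exactly the fixed fields $M = L^H$ of the normal subgroups $H \trianglelefteq G$, so the hypothesis supplies us with $h_{L^H/K}$ for every normal $H$.

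First I would translate each such conductor into information about the filtration. Fix $H \trianglelefteq G$ and set $M = L^H$, so that $\Gal(M/K) = G/H$ and $(G/H)^i = G^i/(G^i \cap H)$. This quotient is trivial precisely when $G^i \subseteq H$, whence
\[
 h_{L^H/K} = \sup_{i \in [0,\infty)} \bigl( G^i \not\subseteq H \bigr).
\]
Because the upper-numbering filtration is a left-continuous step function whose downward jumps occur to the right (Serre's convention, \cite[IV]{Se:lf}), the set $\{\, i : G^i \not\subseteq H \,\}$ is either empty (the unramified case) or a closed interval $[0, h_{L^H/K}]$; in particular the supremum is attained, yielding the clean equivalence $G^i \subseteq H \iff i > h_{L^H/K}$.

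The key step is then the reconstruction formula
\[
 G^i \;=\; \bigcap_{\substack{H \trianglelefteq G \\ h_{L^H/K} < i}} H .
\]
The inclusion $\subseteq$ is immediate from the equivalence above, since every $H$ occurring in the intersection contains $G^i$. For the reverse inclusion I would use that the upper-numbering groups are themselves normal in $G$ (as $G^i = G_{\psi_{L/K}(i)}$ and the lower-numbering groups are normal), so $H = G^i$ is an admissible index provided $h_{L^{G^i}/K} < i$. This is exactly where the jump convention does the work: for $j \geq i$ one has $G^j \subseteq G^i$, while left-continuity forces $G^j = G^i$ on a nonempty interval immediately to the left of $i$, so $\sup\{\, j : G^j \not\subseteq G^i \,\}$ equals the previous jump value and is strictly less than $i$. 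Hence $G^i$ occurs in the intersection, giving $\supseteq$ and thus equality.

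The main obstacle is precisely the bookkeeping at the endpoints of the jumps: one must pin down whether each supremum defining a conductor is attained, in order to justify both the equivalence $G^i \subseteq H \iff i > h_{L^H/K}$ and the strict inequality $h_{L^{G^i}/K} < i$. Everything else is formal, being the Galois correspondence combined with the quotient-compatibility of the upper numbering; once the reconstruction formula holds for all $i$, the filtration (upper and lower) is completely determined by the given conductors.
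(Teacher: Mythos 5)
Your proposal is correct and follows essentially the same route as the paper: both reduce to determining $G^i$, use invariance under quotients to get the equivalence $G^i \subseteq H \iff i > h_{L^H/K}$ for normal $H$, and then use normality of $G^i$ to identify it as the smallest normal subgroup (equivalently, the intersection of all normal subgroups) whose fixed field has conductor less than $i$. The only difference is cosmetic: you verify directly that $h_{L^{G^i}/K} < i$ via the jump conventions and attainment of the supremum (a point the paper's equivalence implicitly contains), whereas the paper invokes Lemma \ref{Lcompositum} to see that the minimal such normal subgroup is well defined.
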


\begin{proof}
Clearly it is enough to determine $G^i$ for all $i \geq 0$.  For any normal subgroup $H \leq G$, 
invariance under quotients shows that $H \geq G^i$ iff $(G/H)^i = \{id\}$, that is, if $h_{L^H/K} < i$ (the ramification filtration on $G/H$ 
corresponds to the extension $L^H/K$).  
If $H_1$ and $H_2$ are two normal subgroups of $G$, then $L^{H_1 \cap H_2}$ is the compositum of
$L^{H_1}$ and $L^{H_2}$.  Thus Lemma \ref{Lcompositum} shows that there is a unique minimal normal subgroup $H \leq G$ 
such that $h_{L^H/K} < i$.  Since $G^i$ is normal in $G$ (\cite[IV, Proposition 1]{Se:lf}), we conclude that $G^i$ is, in fact, 
this minimal normal subgroup $H$.
\end{proof}

The following result of Serre is proved only in the case $K_0 = \rats_p$, but the proof immediately extends to the context below.

\begin{prop}[\cite{Se:lf}, IV, Proposition 18]\label{Pcyclic}
Let $k$ be an algebraically closed field of characteristic $p$, let $K_0 = \Frac(W(k))$, and for $n \geq 1$, let $K_n = K_0(\zeta_{p^n})$.  
Identify $G := \Gal(K_n/K_0)$ with $(\ints/p^n)^{\times}$. 
Then $G_0 = G$, and for $0 < i \leq p^{n-1}$, we have 
$$G_i = \{a \in G \, | \, a \equiv 1 \pmod{p^{\lfloor \log_p i \rfloor +1}} \}.$$  
In particular, one calculates that $h_{K_n/K_0} = n-1$.
\end{prop}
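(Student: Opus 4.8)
The plan is to follow Serre's original argument; the only new feature is that $k$ is algebraically closed rather than finite, and I would begin by checking that this is harmless. Since $W(k)$ is a complete discrete valuation ring with uniformizer $p$, the polynomial $\Phi_{p^n}(x+1)$, where $\Phi_{p^n}$ is the $p^n$th cyclotomic polynomial, is Eisenstein with respect to $(p)$; hence $\Phi_{p^n}$ is irreducible over $K_0$. Therefore $[K_n : K_0] = \deg \Phi_{p^n} = p^{n-1}(p-1)$, the extension $K_n/K_0$ is totally ramified, and $\pi := \zeta_{p^n} - 1$ is a uniformizer of $K_n$. In particular $G_0 = G$.

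To compute the lower numbering I would evaluate, for each nontrivial $\sigma_a : \zeta_{p^n} \mapsto \zeta_{p^n}^a$, the quantity $v_{K_n}(\sigma_a \pi - \pi) = v_{K_n}(\zeta_{p^n}^{a} - \zeta_{p^n}) = v_{K_n}(\zeta_{p^n}^{a-1} - 1)$, where the last equality uses that $\zeta_{p^n}$ is a unit. Setting $s = v_p(a-1) \in \{0, 1, \ldots, n-1\}$, the element $\zeta_{p^n}^{a-1}$ is a primitive $p^{n-s}$th root of unity, and the standard fact $v_{K_n}(\eta - 1) = p^{n-m}$ for a primitive $p^m$th root of unity $\eta$ (which follows from $\Phi_{p^m}(1) = p$ together with total ramification) gives $v_{K_n}(\sigma_a \pi - \pi) = p^s$. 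The defining criterion $\sigma_a \in G_i \iff v_{K_n}(\sigma_a \pi - \pi) \ge i+1$ then reads $\sigma_a \in G_i \iff p^s \ge i+1$, and the asserted description of $G_i$ follows from the elementary equivalence $p^s \ge i + 1 \iff s \ge \lfloor \log_p i \rfloor + 1$ (for integers $i \ge 1$, $s \ge 0$), which I would verify by locating $i$ between consecutive powers of $p$.

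For the conductor I would pass to the Herbrand function. Among $a \ne 1$ the largest value of $s$ is $n-1$, realized by $a = 1 + p^{n-1}$, whence the highest lower jump is $\ell_{K_n/K_0} = p^{n-1}-1$. Since the principal units of level $j$ in $(\ints/p^n)^{\times}$ form a subgroup of order $p^{n-j}$, the description above yields $|G_i| = p^{n-1-\lfloor \log_p i \rfloor}$ for $0 < i \le p^{n-1}$. Using $h_{K_n/K_0} = \phi_{K_n/K_0}(\ell_{K_n/K_0})$ together with the standard formula for $\phi_{K_n/K_0}$ at the integer $\ell = p^{n-1}-1$, I get $h_{K_n/K_0} = \frac{1}{|G_0|}\sum_{i=1}^{p^{n-1}-1}|G_i|$. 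Grouping the indices $i$ by the value $m = \lfloor \log_p i \rfloor$, which ranges over $0, \ldots, n-2$ with exactly $p^m(p-1)$ indices in each block contributing $p^{n-1-m}$ apiece, collapses the sum to $(n-1)(p-1)p^{n-1}$; dividing by $|G_0| = (p-1)p^{n-1}$ gives $h_{K_n/K_0} = n-1$.

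The valuation computation and the Herbrand bookkeeping are both routine; the only step demanding care is the final summation, where one must check that the index ranges partition exactly into complete blocks between consecutive powers of $p$, the top block being truncated precisely at $p^{n-1}-1$, so that every block contributes the same amount $(p-1)p^{n-1}$. I expect this bookkeeping, rather than any conceptual difficulty, to be the main place where an error could arise.
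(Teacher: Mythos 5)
Your proof is correct and coincides with the paper's: the paper offers no independent argument, citing \cite[IV, Proposition 18]{Se:lf} with the remark that Serre's proof for $\rats_p$ extends immediately to $K_0 = \Frac(W(k))$, and your write-up is exactly that extension --- the Eisenstein/total-ramification step (which is where algebraic closedness of $k$ must be checked to be harmless, as you do), the computation $v_{K_n}(\sigma_a\pi - \pi) = p^{v_p(a-1)}$, and the Herbrand bookkeeping in the paper's shifted conductor convention. All details check, including the final summation, where each block $p^m \leq i \leq p^{m+1}-1$ (for $0 \leq m \leq n-2$, the top block ending exactly at $p^{n-1}-1$) contributes $(p-1)p^{n-1}$, giving $h_{K_n/K_0} = n-1$.
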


\section{Ramification filtrations in towers}\label{Stowers}
In this section, we give several results about how conductors act in towers.


\begin{lemma}\label{Lramintowers}
Let $K$ be a complete discrete valuation field with
algebraically closed residue field.  Let $K \subseteq L \subseteq M$ be finite Galois extensions such that $M$ is Galois over $K$.  
Then, either $h_{M/K} = h_{L/K}$ (in which case $h_{M/L} \leq \ell_{L/K}$), 
or $h_{M/K} > h_{L/K}$, in which case $$\frac{1}{[L:K]}(h_{M/L} - \ell_{L/K}) = h_{M/K} - h_{L/K}.$$
\end{lemma}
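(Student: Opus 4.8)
The plan is to work with the functions $\phi$ and $\psi$ that relate the upper and lower numberings, exploiting the multiplicativity in towers: $\psi_{M/K} = \psi_{M/L} \circ \psi_{L/K}$ and correspondingly $\phi_{M/K} = \phi_{L/K} \circ \phi_{M/L}$. The conductor $h_{M/K}$ is the upper-numbering jump, so the natural strategy is to locate the top jump of $M/K$ and trace where it comes from through this composition. Let me recall the key facts I would lean on: $\phi_{L/K}$ is piecewise linear, continuous, increasing, with $\phi_{L/K}(0)=0$; on the interval $[0, \ell_{L/K}]$ (more precisely up to the first lower jump, but globally its slope is at most $1$ and equals $1/[L:K]$ for arguments beyond $\ell_{L/K}$) it has slope given by reciprocals of ramification-group indices; and crucially, for $j \geq \ell_{L/K}$ the slope of $\phi_{L/K}$ is exactly $1/[L:K]$ since $G_j = \{\mathrm{id}\}$ there (using that the residue field is algebraically closed, so the extension is totally ramified and $[L:K]$ equals the ramification index).

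First I would dispose of the dichotomy itself. Since the upper-numbering filtration on $\Gal(M/K)$ surjects onto that of $\Gal(L/K)$ under the quotient map (invariance of the upper numbering under quotients, as recalled before Lemma~\ref{Lcompositum}), we always have $h_{M/K} \geq h_{L/K}$. This gives the two cases: equality, or strict inequality. In the equality case, I would argue that $h_{M/L} \leq \ell_{L/K}$ as follows: if $h_{M/L} > \ell_{L/K}$, then the top lower jump $\ell_{M/K} = \psi_{M/K}(h_{M/K}) = \psi_{L/K}(\psi_{M/L}(h_{M/L}))$ would exceed $\psi_{L/K}(\ell_{L/K}) = \ell_{L/K}$ by an amount processed through the slope-$1/[M:L]$ and then slope-$[L:K]$ regimes, forcing $h_{M/K} = \phi_{M/K}(\ell_{M/K}) > h_{L/K}$, a contradiction. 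I would make this precise by direct evaluation rather than by vague appeal.

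For the main (strict) case I would compute directly. Set $h := h_{M/K}$, so $h$ is the location of the top upper jump of $M/K$, equivalently $h = \phi_{M/K}(\ell_{M/K})$ where $\ell_{M/K} = \psi_{M/L}(h_{M/L})$. Using $\phi_{M/K} = \phi_{L/K}\circ\phi_{M/L}$, I would write $h_{M/K} = \phi_{L/K}(\phi_{M/L}(\ell_{M/K})) = \phi_{L/K}(h_{M/L})$. Now I would split $\phi_{L/K}(h_{M/L})$ at the point $\ell_{L/K}$: since $h_{M/K} > h_{L/K}$ is equivalent (via the equality case just handled) to $h_{M/L} > \ell_{L/K}$, the argument $h_{M/L}$ lies in the linear regime where $\phi_{L/K}$ has constant slope $1/[L:K]$. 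Hence
\begin{equation*}
\phi_{L/K}(h_{M/L}) = \phi_{L/K}(\ell_{L/K}) + \frac{1}{[L:K]}\bigl(h_{M/L} - \ell_{L/K}\bigr) = h_{L/K} + \frac{1}{[L:K]}\bigl(h_{M/L} - \ell_{L/K}\bigr),
\end{equation*}
using $\phi_{L/K}(\ell_{L/K}) = h_{L/K}$. Rearranging yields exactly the claimed identity $\frac{1}{[L:K]}(h_{M/L} - \ell_{L/K}) = h_{M/K} - h_{L/K}$.

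The main obstacle, and the step deserving the most care, is justifying that $\phi_{L/K}$ has slope precisely $1/[L:K]$ for all arguments $\geq \ell_{L/K}$, rather than merely $\leq 1$. This is where the hypothesis that the residue field of $K$ is algebraically closed enters decisively: it forces $L/K$ (and $M/K$) to be totally ramified, so that $[L:K]$ equals the ramification index and equals $(G_0 : 1)$, and beyond the top lower jump $\ell_{L/K}$ all ramification groups are trivial, pinning the slope of $\psi_{L/K}$ to $[L:K]$ and hence that of $\phi_{L/K}$ to $1/[L:K]$. I would state this slope computation carefully from the defining formula $\psi_{L/K}'(j) = (G_0 : G_j)$, noting that $G_j = \{\mathrm{id}\}$ for $j > \ell_{L/K}$, and then verify that the breakpoint at $\ell_{L/K}$ causes no trouble because $\phi_{M/L}(\ell_{M/K}) = h_{M/L} > \ell_{L/K}$ sits strictly inside the final linear piece.
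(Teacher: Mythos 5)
Your overall framework---Herbrand functions, the composition $\phi_{M/K} = \phi_{L/K}\circ\phi_{M/L}$, quotient-invariance for the dichotomy $h_{M/K} \geq h_{L/K}$, and the computation that $\phi_{L/K}$ has slope exactly $1/[L:K]$ beyond $\ell_{L/K}$ (correctly justified via total ramification from the algebraically closed residue field)---is sound and close in spirit to the paper's proof. But there is a genuine gap at the pivotal step. In the strict case you write ``$h = \phi_{M/K}(\ell_{M/K})$ where $\ell_{M/K} = \psi_{M/L}(h_{M/L})$'' as if the second equality were definitional; since $\psi_{M/L}(h_{M/L}) = \ell_{M/L}$, this is precisely the nontrivial assertion $\ell_{M/K} = \ell_{M/L}$ (the paper's equation (2.3)), and it is false in general: when $h_{M/L} \leq \ell_{L/K}$ one has $h_{M/K} = h_{L/K}$, which can be strictly larger than $\phi_{L/K}(h_{M/L})$. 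You also justify $h_{M/L} > \ell_{L/K}$ by claiming that $h_{M/K} > h_{L/K}$ is ``equivalent (via the equality case just handled) to $h_{M/L} > \ell_{L/K}$,'' but your equality-case argument establishes only the implication $h_{M/L} > \ell_{L/K} \Rightarrow h_{M/K} > h_{L/K}$; the converse is exactly what the strict case needs, and it does not follow---it is again equivalent to the unproved identity $\ell_{M/L} = \ell_{M/K}$. (Moreover, your equality-case formula $\psi_{M/K}(h_{M/K}) = \psi_{L/K}(\psi_{M/L}(h_{M/L}))$ is garbled: $\psi_{M/K} = \psi_{M/L}\circ\psi_{L/K}$, so the composition is in the wrong order, and $h_{M/K}$ has been silently replaced by $h_{M/L}$. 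A correct version uses only $\ell_{M/K} \geq \ell_{M/L}$, which is automatic from $H_i = G_i \cap H$: then $h_{M/K} = \phi_{M/K}(\ell_{M/K}) \geq \phi_{L/K}(\phi_{M/L}(\ell_{M/L})) = \phi_{L/K}(h_{M/L}) > \phi_{L/K}(\ell_{L/K}) = h_{L/K}$.)

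The missing step is short but is the heart of the lemma, and it is where the paper does its group-theoretic work: the paper introduces $j$, the largest index with $G_j \not\leq H$, proves $\phi_{M/K}(j) = h_{L/K}$ and $\phi_{M/L}(j) = \ell_{L/K}$, and uses $G_i = H_i$ for $i > j$ both to get $\ell_{M/L} = \ell_{M/K}$ and to compare slopes. In your setup the gap can be filled as follows: the sup defining $h_{M/K}$ is attained, so $G^{h_{M/K}} = G_{\ell_{M/K}} \neq \{id\}$; if $h_{M/K} > h_{L/K}$, invariance under quotients gives $(G/H)^{h_{M/K}} = G^{h_{M/K}}H/H = \{id\}$, hence $G_{\ell_{M/K}} \leq H$, so $H_{\ell_{M/K}} = G_{\ell_{M/K}} \cap H \neq \{id\}$ and therefore $\ell_{M/L} = \ell_{M/K}$. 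This yields $h_{M/K} = \phi_{L/K}(\phi_{M/L}(\ell_{M/L})) = \phi_{L/K}(h_{M/L})$, and since $\phi_{L/K}$ is strictly increasing with $\phi_{L/K}(\ell_{L/K}) = h_{L/K}$, it also yields the needed $h_{M/L} > \ell_{L/K}$; your final-linear-piece computation then goes through verbatim. With this insertion your argument becomes a correct, mildly reorganized version of the paper's proof (evaluating $\phi_{L/K}$ past its last jump rather than comparing slopes of $\phi_{M/K}$ and $\phi_{M/L}$ past $j$); as written, however, the central claim is assumed rather than proved.
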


\begin{proof}
Let $G = \Gal(M/K), H = \Gal(M/L),$ and $G/H = \Gal(L/K)$.
Since the upper numbering is invariant under taking quotients, $h_{M/K} \geq h_{L/K}$.  
Let $j$ be the greatest index such that $G_j \gneq H_j$, i.e., $j$ is the greatest index such that
$G_j \not \leq H$.  Then $G_i = H_i$ for all $i > j$.
Now, $G_j = G^{\phi_{M/K}(j)}$.  By invariance
under quotients, we have that 
\begin{equation}\label{Ehlk}
\phi_{M/K}(j) = h_{L/K}.
\end{equation}  
By applying $\psi_{L/K}$ to both
sides of (\ref{Ehlk}), and using the fact that $\phi_{M/K} = \phi_{L/K} \circ \phi_{M/L}$ (\cite[IV, Proposition 15]{Se:lf}), we obtain that 
\begin{equation}\label{Ellk}
\phi_{M/L}(j) = \ell_{L/K}.
\end{equation}

Suppose that $h_{M/K} = h_{L/K}$.  Then, applying $\psi_{L/K}$, we get  $\psi_{L/K}(h_{M/K}) = \ell_{L/K}$.  But 
\begin{eqnarray*}
\psi_{L/K}(h_{M/K}) &=& (\psi_{L/K} \circ \phi_{M/K})(\ell_{M/K}) = (\psi_{L/K} \circ \phi_{L/K} \circ \phi_{M/L})(\ell_{M/K}) \\
&=& \phi_{M/L}(\ell_{M/K}) \geq \phi_{M/L}(\ell_{M/L}) = h_{M/L},
\end{eqnarray*} 
so $h_{M/L} \leq \ell_{L/K}$.

Now suppose that $h_{M/K} > h_{L/K}$. Applying $\psi_{M/K}$ to both sides yields $$\ell_{M/K} > \psi_{M/K}(h_{L/K}) = j,$$ the last equality following from (\ref{Ehlk}).  
Since $G_i = H_i$ for $i > j$, we have that 
\begin{equation}\label{Ekandl}
\ell_{M/L} = \ell_{M/K}.
\end{equation}
It also follows that, for all $i > j$, we have $[G:G_i] = [L:K][H:H_i]$, thus the slope of $\phi_{M/L}$ is $[L:K]$ times the slope of $\phi_{M/K}$ for arguments greater than $j$.  
So
\begin{equation}\label{Efinal}
\frac{1}{[L:K]}(\phi_{M/L}(\ell_{M/K}) - \phi_{M/L}(j)) = (\phi_{M/K}(\ell_{M/K}) - \phi_{M/K}(j)).
\end{equation}  
By (\ref{Ekandl}), 
$\phi_{M/L}(\ell_{M/K}) = \phi_{M/L}(\ell_{M/L}) = h_{M/L}$, and $\phi_{M/K}(\ell_{M/K}) = h_{M/K}$.   
Now substituting (\ref{Ehlk}) and (\ref{Ellk}) into (\ref{Efinal}) gives the statement of the lemma.
\end{proof}

\begin{corollary}\label{C2goingdown} 
Let $K$ be a complete discrete valuation field with algebraically closed residue field, and let $K \subset L \subseteq M$ be 
Galois field extensions so that $M$ is
Galois over $K$.  Assume that $[L:K] = p$.  Then 
$$h_{M/K} = \max(h_{L/K}, \frac{p-1}{p}h_{L/K} + \frac{1}{p}h_{M/L}).$$
\end{corollary}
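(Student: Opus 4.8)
The plan is to reduce the statement directly to Lemma \ref{Lramintowers}, whose two cases will produce the two arguments of the maximum. The only genuinely new input needed is a relation between $\ell_{L/K}$ and $h_{L/K}$ for the bottom extension.

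First I would observe that, since $L/K$ is Galois of prime degree $p$, the group $\Gal(L/K)$ is cyclic of order $p$, so its only subgroups are the trivial group and the whole group. Hence the lower ramification filtration $\{G_i\}$ of $L/K$ has a single break, at $i = \ell_{L/K}$, and for $i \le \ell_{L/K}$ one has $[G_0 : G_i] = 1$. Therefore $\phi_{L/K}$ has slope $1$ on $[0, \ell_{L/K}]$, and consequently $h_{L/K} = \phi_{L/K}(\ell_{L/K}) = \ell_{L/K}$. This equality is the key simplification; everything that follows is bookkeeping.

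Next I would apply Lemma \ref{Lramintowers} with $[L:K] = p$ and split into its two cases. In the first case, $h_{M/K} = h_{L/K}$, and the lemma yields $h_{M/L} \le \ell_{L/K} = h_{L/K}$; substituting into the second argument of the maximum gives $\tfrac{p-1}{p}h_{L/K} + \tfrac{1}{p}h_{M/L} \le h_{L/K}$, so the maximum is $h_{L/K} = h_{M/K}$, as claimed. In the second case, $h_{M/K} > h_{L/K}$, and the lemma's identity, with $\ell_{L/K} = h_{L/K}$, reads $\tfrac{1}{p}(h_{M/L} - h_{L/K}) = h_{M/K} - h_{L/K}$; solving for $h_{M/K}$ gives exactly $h_{M/K} = \tfrac{p-1}{p}h_{L/K} + \tfrac{1}{p}h_{M/L}$. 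To confirm that this expression is the maximum, I would note that the hypothesis $h_{M/K} > h_{L/K}$ of this case forces $\tfrac{p-1}{p}h_{L/K} + \tfrac{1}{p}h_{M/L} > h_{L/K}$, equivalently $h_{M/L} > h_{L/K}$, so the second argument indeed dominates.

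I do not expect a serious obstacle: the substantive content lives entirely in Lemma \ref{Lramintowers}, and the only new ingredient is the equality $\ell_{L/K} = h_{L/K}$ for a degree-$p$ extension, which holds because $\phi_{L/K}$ has unit slope below its single break. The mild care required is checking, in each of the two cases, that the correct term of the maximum is selected; but this falls out of the very inequalities that distinguish the two cases of the lemma, so the argument closes without incident.
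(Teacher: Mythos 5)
Your proof is correct and takes essentially the same approach as the paper: both rest on the observation that $\ell_{L/K} = h_{L/K}$ for the degree-$p$ bottom extension and then apply Lemma \ref{Lramintowers}, solving for $h_{M/K}$ in its two cases. Your version merely spells out the justification of $\ell_{L/K} = h_{L/K}$ (unit slope of $\phi_{L/K}$ below the single break) and the verification that the correct term of the maximum is selected in each case, details the paper's proof leaves implicit.
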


\begin{proof}
Since $h_{L/K}$ is equal to the (only) lower jump $\ell_{L/K}$ of $L/K$, Lemma \ref{Lramintowers} implies that either $h_{M/K} =
h_{L/K}$ or $\frac{1}{p}(h_{M/L} - h_{L/K}) = h_{M/K} - h_{L/K}$.  Solving for $h_{M/K}$ proves the corollary.
\end{proof}

The next corollary generalizes \cite[Lemme 1.1.4]{Ra:sp}.

\begin{corollary}\label{C2compositum}
Let $K$ be a complete discrete valuation field with algebraically closed residue field, let $K'$ be a $\ints/p$-extension of $K$, and let $L$ be any finite Galois
extension of $K$.  Write $L'$ for the compositum of $L$ and $K'$ in some algebraic closure of $K$ (we do not assume the extensions 
are linearly disjoint).  Then, if $h_{L/K} > h_{K'/K}$, we have $h_{L'/K'} = ph_{L/K} - (p-1)h_{K'/K}$.  If $h_{L/K} \leq h_{K'/K}$, 
then $h_{L'/K'} \leq h_{K'/K}$.
\end{corollary}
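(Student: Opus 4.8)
The plan is to reduce everything to the single tower $K \subseteq K' \subseteq L'$ together with Lemma \ref{Lcompositum}, so that no separate analysis of the base-changed extension $L'/L$ is required. First I would note that $L'$, being the compositum of the Galois extensions $L$ and $K'$, is Galois over $K$, and that $K \subseteq K' \subseteq L'$ is a tower of Galois extensions whose bottom step $K'/K$ has degree $p$. Hence Corollary \ref{C2goingdown} applies to this tower, and it does so whether or not $L$ and $K'$ are linearly disjoint over $K$: if $K' \subseteq L$ the tower simply degenerates (with $L' = L$) without affecting any of the cited statements. This is precisely why the linear disjointness hypothesis can be omitted.

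Next I would record the two facts that drive the argument. Lemma \ref{Lcompositum}, applied to the subfields $L$ and $K'$ of $L'$ (whose compositum is $L'$), gives
\begin{equation}\label{Einputone}
h_{L'/K} = \max(h_{L/K},\, h_{K'/K}),
\end{equation}
while Corollary \ref{C2goingdown}, applied to $K \subseteq K' \subseteq L'$, gives
\begin{equation}\label{Einputtwo}
h_{L'/K} = \max\!\left(h_{K'/K},\; \tfrac{p-1}{p}h_{K'/K} + \tfrac{1}{p}h_{L'/K'}\right).
\end{equation}
The whole statement then emerges by comparing (\ref{Einputone}) and (\ref{Einputtwo}) in the two regimes.

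If $h_{L/K} > h_{K'/K}$, then (\ref{Einputone}) gives $h_{L'/K} = h_{L/K}$. Since this value strictly exceeds the first entry $h_{K'/K}$ of the maximum in (\ref{Einputtwo}), that maximum must be attained at its second entry; setting the second entry equal to $h_{L/K}$ and solving for $h_{L'/K'}$ produces $h_{L'/K'} = ph_{L/K} - (p-1)h_{K'/K}$. If instead $h_{L/K} \leq h_{K'/K}$, then (\ref{Einputone}) gives $h_{L'/K} = h_{K'/K}$, so the maximum in (\ref{Einputtwo}) equals $h_{K'/K}$; this forces the second entry to be at most $h_{K'/K}$, which rearranges directly to $h_{L'/K'} \leq h_{K'/K}$.

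Both of the driving facts are immediate citations, so there is no single heavy computation to perform; the only genuine idea is to run the going-down corollary along the tower through $K'$ (whose bottom step has degree $p$) rather than through $L$ (whose bottom step need not). The main thing to verify carefully is that Corollary \ref{C2goingdown} is legitimately applicable in every case, including the degenerate configurations $L = K$, $L = K'$, and $K' \subseteq L$, and that in the second regime one reads off an inequality rather than an equality, since the maximum in (\ref{Einputtwo}) is then governed by its first entry.
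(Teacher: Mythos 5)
Your proof is correct and takes essentially the same route as the paper: both first compute $h_{L'/K} = \max(h_{L/K}, h_{K'/K})$ via Lemma \ref{Lcompositum} and then run the tower $K \subseteq K' \subseteq L'$ through the going-down result, the only cosmetic difference being that you cite Corollary \ref{C2goingdown} (the degree-$p$ packaging of Lemma \ref{Lramintowers}) where the paper applies Lemma \ref{Lramintowers} directly together with the fact that $\ell_{K'/K} = h_{K'/K}$. Your extra attention to the degenerate configurations (e.g.\ $K' \subseteq L$, where $L' = L$) is a harmless supplement, not a divergence.
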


\begin{proof}
We draw a diagram of the situation as follows:

\[
\xymatrix{
L \ar@{^{(}->}[r]^-{h_{L'/L}} & L'  
\\
K  \ar@{^{(}->}[r]^{h_{K'/K}}_{\ints/p}  \ar@{^{(}->}[u]^{h_{L/K}} & K'  
\ar@{^{(}->}[u]_{h_{L'/K'}} }
\]

If $h_{L/K} > h_{K'/K}$, then by Lemma \ref{Lcompositum}, $h_{L'/K} = h_{L/K} > h_{K'/K}$.
In this situation the corollary follows from Lemma \ref{Lramintowers} applied to the tower $K \subseteq K' \subseteq L'$, using the fact that 
$\ell_{K'/K} = h_{K'/K}.$  
If $h_{L/K} \leq h_{K'/K}$, then, by Lemma \ref{Lcompositum}, we have $h_{L'/K} = h_{K'/K}$.  Then $h_{L'/K'} \leq \ell_{K'/K} = h_{K'/K}$ by Lemma \ref{Lramintowers} 
applied to the tower $K \subseteq K' \subseteq L'$.
\end{proof}

Recall that $K_0$ is $\Frac(W(k))$, where $k$ is an algebraically closed field of characteristic $p$, and that $K_r = K_0(\zeta_{p^r})$.

\begin{corollary}\label{C2rootsofunity}
Let $r \geq 1$, and let $M/K_{\ell}$ be a finite extension such that $M/K_0$ is Galois.  Then
$$h_{M/K_0} = \max\left(\ell - 1, \ell-\frac{p}{p-1} + \frac{1}{(p-1)p^{\ell-1}}\left(h_{M/K_{\ell}} + 1\right)\right).$$
\end{corollary}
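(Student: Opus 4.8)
The plan is to apply Lemma \ref{Lramintowers} to the tower $K_0 \subseteq K_{\ell} \subseteq M$ (note that $M/K_{\ell}$ is automatically Galois, since $K_{\ell}$ is the fixed field of a subgroup of $\Gal(M/K_0)$). For this I first record the three invariants of $K_{\ell}/K_0$ that enter the lemma: the degree $[K_{\ell}:K_0] = (p-1)p^{\ell-1}$, the conductor, and the highest lower jump. By Proposition \ref{Pcyclic}, $h_{K_{\ell}/K_0} = \ell - 1$. To get $\ell_{K_{\ell}/K_0}$ I would read off the lower-numbering filtration from the same proposition: under the identification $\Gal(K_{\ell}/K_0) \cong (\ints/p^{\ell})^{\times}$, the group $G_i$ is trivial exactly when $\lfloor \log_p i \rfloor + 1 \geq \ell$, i.e.\ when $i \geq p^{\ell-1}$, whereas $G_{p^{\ell-1}-1}$ still has order $p$. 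Hence $\ell_{K_{\ell}/K_0} = p^{\ell-1}-1$ (consistent with the tame case $\ell=1$, where this value is $0$).

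Feeding these into Lemma \ref{Lramintowers} produces a dichotomy. Writing $A = \ell - 1$ and $B = \ell - \frac{p}{p-1} + \frac{1}{(p-1)p^{\ell-1}}(h_{M/K_{\ell}}+1)$ for the two arguments of the desired maximum, the first branch of the lemma gives $h_{M/K_0} = h_{K_{\ell}/K_0} = A$ together with $h_{M/K_{\ell}} \leq \ell_{K_{\ell}/K_0} = p^{\ell-1}-1$, while the second branch gives $h_{M/K_0} > A$ and
$$\frac{1}{(p-1)p^{\ell-1}}\bigl(h_{M/K_{\ell}} - (p^{\ell-1}-1)\bigr) = h_{M/K_0} - A.$$
In the second branch I would simply solve this for $h_{M/K_0}$; regrouping the constants via $\ell - 1 - \tfrac{1}{p-1} = \ell - \tfrac{p}{p-1}$ shows the right-hand side equals $B$, and since here $h_{M/K_0} > A$ we get $B > A$, so $h_{M/K_0} = B = \max(A,B)$.

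It remains to handle the first branch, and this is where the two cases knit together. The key observation I would make is that the inequality $A \geq B$ is, by exactly the same algebra, equivalent to $h_{M/K_{\ell}} + 1 \leq p^{\ell-1}$, i.e.\ to $h_{M/K_{\ell}} \leq \ell_{K_{\ell}/K_0}$; but that last inequality is precisely the constraint supplied by the first branch of Lemma \ref{Lramintowers}. Hence in the first branch $A \geq B$, so $\max(A,B) = A = h_{M/K_0}$, and the formula holds in both cases. The only genuinely delicate point is the clean computation of $\ell_{K_{\ell}/K_0} = p^{\ell-1}-1$ from Proposition \ref{Pcyclic}, and in particular verifying that this value matches the boundary term of the maximum so that the two branches fuse into a single closed formula; everything else is routine rearrangement.
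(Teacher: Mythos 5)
Your proposal is correct and follows essentially the same route as the paper: both apply Lemma \ref{Lramintowers} to the tower $K_0 \subset K_{\ell} \subset M$ with the values $h_{K_{\ell}/K_0} = \ell-1$ and $\ell_{K_{\ell}/K_0} = p^{\ell-1}-1$ from Proposition \ref{Pcyclic}, and solve the resulting dichotomy for $h_{M/K_0}$. The only difference is that you spell out the check (left implicit in the paper's ``solving for $h_{M/K_0}$ yields the corollary'') that in the first branch the constraint $h_{M/K_{\ell}} \leq p^{\ell-1}-1$ forces the second argument of the maximum to be at most $\ell-1$, so the two branches merge into the single stated formula.
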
 

\begin{proof}  
By Proposition \ref{Pcyclic}, the conductor of $K_{\ell}/K_0$ is $\ell-1$, and the greatest
lower jump is $p^{\ell-1} - 1$.  So by Lemma \ref{Lramintowers}, applied to the tower $K_0 \subset K_{\ell} \subset M$, either $h_{M/K_0} = \ell-1$ or
$\frac{1}{(p-1)p^{\ell-1}}(h_{M/K_{\ell}} - (p^{\ell-1} - 1)) = h_{M/K_0} - (\ell-1)$.  Solving for $h_{M/K_0}$ yields the corollary.
\end{proof}

\section{Prime order extensions in mixed characteristic}\label{Sprime}

If $K$ is a mixed characteristic $(0,p)$ discrete valuation ring, then we write $e_K = v_K(p)$, which is the absolute ramification index of $K$.
 
\begin{lemma}\label{Lpthpowers}
Let $K$ be a finite extension of $K_0$, and suppose $a = 1 + t \in K$ with $v_K(t) < \frac{p}{p-1}e_K$.  
Then any $p$th root of $a$ can be expressed (in an appropriate 
finite extension $L/K$) as $1 + r$, where $v_L(r) =  \frac{v_L(t)}{p}$.
\end{lemma}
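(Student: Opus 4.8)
The plan is to pass to a finite extension $L/K$ containing a $p$th root $b$ of $a$, set $r = b-1$, and study the polynomial
\[
f(X) = (1+X)^p - a = X^p + \sum_{i=1}^{p-1}\binom{p}{i}X^i - t \in K[X],
\]
whose roots are exactly the elements $r$ for which $1+r$ is a $p$th root of $a$. First I would record the elementary fact that $v_L = e(L/K)\,v_K$ on $K$, so that $e_L = e(L/K)\,e_K$ and the hypothesis $v_K(t) < \frac{p}{p-1}e_K$ is equivalent to $v_L(t) < \frac{p}{p-1}e_L$. With respect to $v_L$, the leading coefficient of $f$ has valuation $0$, the constant term $-t$ has valuation $v_L(t)$, and the coefficient of $X^i$ has valuation $v_L(p) = e_L$ for $1 \le i \le p-1$, since $p \mid \binom{p}{i}$ in that range.

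The core of the argument is then a Newton polygon computation for $f$. The relevant points are $(0, v_L(t))$, the points $(i, e_L)$ for $1 \le i \le p-1$, and $(p, 0)$. The segment joining $(0, v_L(t))$ to $(p,0)$ has slope $-v_L(t)/p$, and at abscissa $i$ it attains height $\frac{p-i}{p}v_L(t)$. When $v_L(t) > 0$ this height is maximized over $1 \le i \le p-1$ at $i = 1$, where it equals $\frac{p-1}{p}v_L(t)$, while for $v_L(t) \le 0$ it is nonpositive. In either case the hypothesis, rewritten as $\frac{p-1}{p}v_L(t) < e_L$, says precisely that every intermediate point $(i, e_L)$ lies strictly above this segment. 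Hence the Newton polygon of $f$ consists of the single segment of slope $-v_L(t)/p$, so every root $r$ of $f$ satisfies $v_L(r) = v_L(t)/p$. Since $r = b-1$, this is exactly the assertion, and the same computation shows that such an $L$ (large enough to contain a root) has the required ramification for $v_L(t)/p$ to be a genuine value.

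The only step requiring genuine care is identifying which intermediate monomial gives the binding constraint; this is where the exponent $\frac{p}{p-1}$ in the hypothesis enters (the worst case being $i=1$), and it is also where strictness is used, to rule out a Newton polygon that breaks at $(1,e_L)$ and produces roots of two distinct valuations. If one prefers to avoid naming the Newton polygon, the identical conclusion follows by a direct valuation comparison in $t = r^p + \sum_{i=1}^{p-1}\binom{p}{i}r^i$: one first shows $v_L(r) < \frac{e_L}{p-1}$, since otherwise every term on the right, including $r^p$, would have valuation at least $\frac{p}{p-1}e_L > v_L(t)$, forcing $v_L(t) > v_L(t)$; this bound then gives $(p-i)v_L(r) < e_L$, i.e.\ $v_L(r^p) < v_L\!\left(\binom{p}{i}r^i\right)$, for every $1 \le i \le p-1$, so $r^p$ is the unique term of minimal valuation and $v_L(t) = p\,v_L(r)$. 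I would present whichever of these two equivalent formulations reads more cleanly.
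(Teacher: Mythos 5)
Your proposal is correct and, in its second formulation, is precisely the paper's proof: the paper plugs $1+r$ into $(1+r)^p = 1+t$, deduces $v_L(r) < \frac{1}{p-1}e_L$ from $v_L(t) < \frac{p}{p-1}e_L$, and concludes that $r^p$ is the unique term of minimal valuation, exactly your ``direct valuation comparison,'' of which your Newton-polygon presentation is just a geometric repackaging of the same estimates. One cosmetic quibble with a side remark: in the boundary case $v_L(t) = \frac{p}{p-1}e_L$ the point $(1,e_L)$ lies \emph{on} the segment and the polygon still does not break (roots of two distinct valuations occur only when $v_L(t) > \frac{p}{p-1}e_L$, in which case $a$ is already a $p$th power by the Epp criterion cited in the paper), so strict inequality is not playing quite the role your closing sentence of the second paragraph assigns it---but this does not affect the validity of the argument.
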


\begin{proof}
If $r \in L$ is as in the lemma, then $1+ t = (1+r)^p = 1 + \sum_{i=1}^{p} \binom{p}{i}r^i$.
Then $v_L(t) \geq \min_{1 \leq i \leq p}(v_L\left(\binom{p}{i}r^i\right))$.  Since $v_L(t) < \frac{p}{p-1}e_L$, we have $v_L(r) < \frac{1}{p-1}e_L$, which in 
turn implies that
the minimum is realized for $i = p$.  So $v_L(t) = v_L(r^p)$, from which the lemma follows.
\end{proof}

\begin{definition}\label{Dprimitive}
Let $K/K_1$ be finite.  If $a = 1 + t$ is an element of $K$ such that either $p \nmid v_K(a)$, or $0 < v_K(t) < \frac{p}{p-1}e_K$ and $p \nmid v_K(t)$, 
then we will say that $a$ is \emph{$p$-primitive for $K$}.
\end{definition}

\begin{lemma}\label{L2extensions}
Let $K$ be a finite extension of $K_1$, and let $L/K$ be a (nontrivial) $\ints/p$-extension. 
\begin{enumerate}[(i)]
\item We have $L = K(\sqrt[p]{a})$, where $a$ is $p$-primitive for $K$. 
\item For $a$ and $t$ as in (i), the conductor $h_{L/K}$ is $\frac{p}{p-1}e_K - v_K(t)$.
\end{enumerate}
\end{lemma}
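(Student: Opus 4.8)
The plan is to treat (i) and (ii) separately, using throughout that the residue field of $K$ is algebraically closed (since $K$ is a finite, hence totally ramified, extension of $K_0$) and that $K \supseteq K_1$ contains $\zeta_p$.

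For (i): By Kummer theory every $\ints/p$-extension of $K$ is $K(\sqrt[p]{b})$ for some $b \in K^\times$, and I am free to modify $b$ by $p$th powers. First I would fix the valuation: if $p \nmid v_K(b)$, multiply $b$ by a power of $\pi_K^p$ to arrange $0 < v_K(a) < p$, so that $t := a - 1$ is a unit and $v_K(t) = 0$; this is the first case of Definition \ref{Dprimitive}. If $p \mid v_K(b)$, I would first make $b$ a unit, then---using that $p$th powers are surjective on $k^\times$---multiply by a unit $p$th power to make $b = 1 + t$ a $1$-unit. I would then run a reduction: as long as $p \mid v_K(t)$ and $v_K(t) \le \frac{p}{p-1}e_K$, I cancel the leading term of $t$ by multiplying by $(1 + d\pi_K^{v_K(t)/p})^{-p}$, which strictly raises $v_K(t)$. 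Below the boundary this only needs a $p$th root in $k$ of the leading coefficient of $t$, while at $v_K(t) = \frac{p}{p-1}e_K$ it requires instead solving an Artin--Schreier equation $x^p + \ol{u}x = \ol{c}$ over $k$; both are solvable because $k$ is algebraically closed.

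To see that this yields a $p$-primitive element, I would use that any $1$-unit of level $> \frac{p}{p-1}e_K$ is a $p$th power (the $p$th-power map induces multiplication by a unit on the graded pieces $U^{(n)}/U^{(n+1)}$ for $n > \frac{e_K}{p-1}$, hence is bijective $U^{(n)} \to U^{(n+e_K)}$ by completeness). Thus if the reduction ever raised $v_K(t)$ above $\frac{p}{p-1}e_K$, then $a$ would be a $p$th power and $L = K$, contradicting nontriviality. Hence the process terminates with $p \nmid v_K(t)$ and $0 < v_K(t) < \frac{p}{p-1}e_K$, which is the second case. The main obstacle in (i) is precisely the boundary level $v_K(t) = \frac{p}{p-1}e_K$, where the obstruction to reducing further is Artin--Schreier rather than Kummer; algebraic closedness of $k$ is what clears it.

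For (ii): Since $k$ is algebraically closed, $L/K$ has no unramified subextension, so it is totally (and wildly) ramified of degree $p$; thus $\Gal(L/K) \cong \ints/p$ has a single lower break $m = \ell_{L/K}$, and because $\phi_{L/K}$ is the identity on $[0,m]$ we get $h_{L/K} = m$. The computational core is the formula $v_L(\sigma x - x) = v_L(x) + m$, valid for every $x \in L^\times$ with $p \nmid v_L(x)$ and every generator $\sigma$: writing $x = \pi_L^{v_L(x)}u$ with $\pi_L$ a uniformizer (so $\mathcal{O}_L = \mathcal{O}_K[\pi_L]$) and $\sigma\pi_L/\pi_L = 1 + c$ with $v_L(c) = i_G(\sigma) - 1 = m$, the term $v_L(x)\,c$ dominates $\sigma x/x - 1$ because $p \nmid v_L(x)$ and $v_L(\sigma u/u - 1) \ge i_G(\sigma) > m$. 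I then apply this to a well-chosen $x$, using $v_L(\zeta_p - 1) = \frac{p}{p-1}e_K$. In the first case take $x = \theta := \sqrt[p]{a}$; from $\sigma\theta = \zeta_p\theta$ one has $\sigma\theta - \theta = (\zeta_p - 1)\theta$, so $v_L(\theta) + m = \frac{p}{p-1}e_K + v_L(\theta)$ and $m = \frac{p}{p-1}e_K = \frac{p}{p-1}e_K - v_K(t)$. In the second case, Lemma \ref{Lpthpowers} gives $\theta = 1 + r$ with $v_L(r) = v_K(t)$ coprime to $p$, and $\sigma r - r = (\zeta_p - 1)(1 + r)$ has valuation $\frac{p}{p-1}e_K$, so $v_K(t) + m = \frac{p}{p-1}e_K$ and again $m = \frac{p}{p-1}e_K - v_K(t)$. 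Either way $h_{L/K} = m = \frac{p}{p-1}e_K - v_K(t)$; the only bookkeeping to watch is that the normalization from (i) forces $v_K(t) = 0$ in the first case, so that the single formula covers both.
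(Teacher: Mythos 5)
Your proof is correct. Part (i) follows essentially the same path as the paper: the same iterative improvement of the Kummer representative (multiplying $a$ by unit $p$th powers to strictly raise $v_K(t)$ whenever $p \mid v_K(t)$), with the only difference that where the paper cites \cite[\S 0.3]{Ep:wr} for the fact that $1$-units of level at least $\frac{p}{p-1}e_K$ are $p$th powers, you prove it yourself via the graded pieces $U^{(n)}/U^{(n+1)}$ and the Artin--Schreier equation at the boundary level --- a more self-contained treatment of the same mechanism, and your explicit handling of the boundary case is a genuine (if minor) improvement in transparency, since the paper's below-boundary reduction step silently relies on the cross term $p\pi_K^{\nu}y$ being negligible, which is exactly what fails at the boundary. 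Part (ii), however, takes a genuinely different route. The paper constructs an explicit uniformizer via B\'{e}zout coefficients ($\pi_L = \pi_K^m a^{n/p}$ when $p \nmid v_K(a)$, resp.\ $\pi_L = \pi_K^m(\sqrt[p]{a}-1)^n$ in the $1$-unit case) and computes $v_L(\sigma\pi_L - \pi_L)$ directly by a binomial expansion in which it must check that all terms have distinct valuations. You instead invoke the standard identity $v_L(\sigma x - x) = v_L(x) + \ell_{L/K}$ for any $x \in L^{\times}$ with $p \nmid v_L(x)$ (valid since $\mathcal{O}_L = \mathcal{O}_K[\pi_L]$ for the totally ramified extension $L/K$, so $i_G(\sigma) \leq v_L(\sigma u - u)$ for units $u$), and apply it to $x = \sqrt[p]{a}$, resp.\ $x = \sqrt[p]{a} - 1$, where $\sigma x - x = (\zeta_p - 1)\sqrt[p]{a}$ has transparent valuation $\frac{p}{p-1}e_K + v_L(\sqrt[p]{a})$; the break then drops out of a one-line equation, with Lemma \ref{Lpthpowers} supplying $v_L(\sqrt[p]{a}-1) = v_K(t)$ exactly as in the paper. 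Your version buys brevity and avoids both the B\'{e}zout bookkeeping and the distinct-valuations check in the expansion; the paper's version buys explicitness (an actual uniformizer, useful for hands-on computation) at the cost of a longer calculation. All the supporting details you use --- total ramification from the algebraically closed residue field, $h_{L/K} = \ell_{L/K}$ for a single-break $\ints/p$-extension since $\phi_{L/K}$ is the identity on $[0,m]$, and the forced normalization $v_K(t)=0$ in the ramified-$a$ case --- check out.
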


\begin{proof}
\emph{To (i):}
By Kummer theory, we know we can find $a$ such that $L \cong K(\sqrt[p]{a})$.  In choosing $a$, we are free to multiply by
elements of $(K^{\times})^p$.  Thus we can assume that $0 \leq v_K(a) < p$.  If $v_K(a) > 0$, then $a$ is $p$-primitive
for $K$.  If $v_K(a) = 0$, we can use the fact that 
$k$ is algebraically closed to multiply $a$ by a $p$th power so that it is congruent to $1$ modulo a uniformizer.  So write $a = 1 + t$, where 
$v_K(t) > 0$.  If $v_K(t) \geq \frac{p}{p-1}e_K$, then $a$ is a $p$th power
in $K$ (\cite[\S 0.3]{Ep:wr}), contradicting the nontriviality of $L/K$.  So $v_K(t) < \frac{p}{p-1}e_K$.  If $p \nmid v_K(t)$, then $a$ is 
$p$-primitive for $K$.  If $p | v_K(t)$,  then write $t = (\pi_K)^{p\nu}w$, where $\pi_K$ is a uniformizer of $K$ and $v_K(w) = 0$.  Let $y \in K$ be 
such that $v_K(y^p + w) > 0$ (we can find such a $y$
because $k$ is algebraically closed).  If $a' = a(1 + (\pi_K)^{\nu}y)^p$, then $a' = 1 + t'$ with
$v_K(t') > v_K(t)$.  So replace $a$ by $a'$ and $t$ by $t'$ and repeat until $a$ is $p$-primitive for $K$.
This process must terminate eventually, as the valuation of $t$ is bounded by $\frac{p}{p-1}e_K$.
\\
\\
\emph{To (ii):} (cf.\ \cite[Theorems 5.6 and 6.3]{Vi:ra})
We first calculate a uniformizer $\pi_L$ of $L$.
Then, if $\sigma$ is a generator of $\Gal(L/K)$, we will determine $v(\sigma(\pi_L - \pi_L))$.  Let 
$\sqrt[p]{a}$ be a choice of $p$th root such that $\sigma(\sqrt[p]{a}) = \zeta_p \sqrt[p]{a}$.

Suppose $p \nmid v_K(a)$.  Choose integers $m$ and $n$ such that $mp + nv_K(a) = 1$.  
Thus $v_L(\pi_K^m a^{n/p}) = 1$, so we set $\pi_L = \pi_K^m a^{n/p}$.  
Since $v_L(\zeta_p-1) = \frac{1}{p-1}e_L$, we have $v_L(\sigma(\pi_L) - \pi_L) = mp + v_L(a^{n/p}) + \frac{1}{p-1}e_L$.
Since $L/K$ is a $\ints/p$-extension, the definition of the conductor gives
$h_{L/K} = mp + \frac{n}{p}v_L(a) + \frac{1}{p-1}e_L - 1 = \frac{p}{p-1}e_K$.  Since $v_K(t) = 0$, this proves the 
proposition in this case.

Now, suppose $p \nmid v_K(t) > 0$.  By Lemma \ref{Lpthpowers}, $v_L(\sqrt[p]{a} - 1) = \frac{v_L(t)}{p} = v_K(t)$.
Since $p \nmid v_K(t)$, there exist $m, n \in \ints$ such that
$mp + n v_K(t) = 1$.  We can even require $0 < n < p$.
Then $\pi_L := \pi_K^m(\sqrt[p]{a} - 1)^n$ is a uniformizer of $L$.

Computing, we find
\begin{eqnarray*}
\sigma(\pi_L) - \pi_L &=& \pi_K^m\left((\sqrt[p]{a} - 1 + (\zeta_p-1)\sqrt[p]{a})^n - (\sqrt[p]{a}-1)^n\right) \\
&=& \pi_K^m \left( \sum_{i=0}^{n-1} \binom{n}{i} (\sqrt[p]{a} - 1)^i((\zeta_p-1)\sqrt[p]{a})^{n-i} \right)
\end{eqnarray*} 
By assumption, $v_L(\zeta_p-1) = \frac{p}{p-1}e_K > v_K(t) = \frac{v_L(t)}{p} = v_L(\sqrt[p]{a} - 1)$.
Also, since $n < p$, it follows that $\binom{n}{i}$ has valuation $0$.  Thus all terms in the sum above have different valuations,
and the term of lowest valuation corresponds to $i = n-1$.
Applying $v_L$ to this term gives $mp + (n-1)v_K(t) + \frac{1}{p-1}e_L = 1 + \frac{p}{p-1}e_K - v_K(t)$.
Since $L/K$ is a $\ints/p$-extension, the definition of the conductor gives $h_{L/K} = \frac{p}{p-1}e_K - v_K(t)$.  
\end{proof}

\begin{lemma}\label{R2extensions}
If $L \cong K(\sqrt[p]{a})$ is a $\ints/p$-extension of $K$
with $a = 1 + t$, then $h_{L/K} \leq \frac{p}{p-1}e_K - v_K(t)$, with equality holding iff $a$ is $p$-primitive for $K$.
In particular, if $a' = 1+t'$ is $p$-primitive for $K$ and $a/a'$ is a $p$th power in $K$, then $v_K(t') \geq v_K(t)$.
\end{lemma}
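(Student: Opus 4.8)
The plan is to derive both assertions from Lemma~\ref{L2extensions}. Running the normalization-and-iteration procedure in the proof of Lemma~\ref{L2extensions}(i) starting from the given $a$ produces a $p$-primitive $a' = 1+t'$ with $L = K(\sqrt[p]{a'})$ and $a/a' \in (K^{\times})^p$; Lemma~\ref{L2extensions}(ii) then gives $h_{L/K} = \frac{p}{p-1}e_K - v_K(t')$. Hence the displayed inequality $h_{L/K} \le \frac{p}{p-1}e_K - v_K(t)$ is equivalent to $v_K(t') \ge v_K(t)$, and equality in it is equivalent to $v_K(t') = v_K(t)$. So it suffices to prove the ``In particular'' statement for an arbitrary $p$-primitive $a' = 1+t'$ with $a/a' \in (K^{\times})^p$, and to check that $v_K(t') = v_K(t)$ forces $a$ to be $p$-primitive.

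Since $v_K(t') \ge 0$ for a $p$-primitive generator, the inequality $v_K(t') \ge v_K(t)$ is automatic when $v_K(t) \le 0$, and equality can then occur only if $v_K(t) \ge 0$; I may therefore assume $v_K(a) \ge 0$. The cases $v_K(t) = 0$ are handled directly (equality holds exactly when the Kummer class has valuation prime to $p$, i.e.\ when $a$ is $p$-primitive, since that is precisely when $h_{L/K} = \frac{p}{p-1}e_K$), which leaves the substantive principal-unit case $a \equiv 1 \pmod{\pi_K}$; here $a'$ is a $p$-primitive unit, hence also $\equiv 1$. Writing $a/a' = c^p$, the element $c$ is a unit with $\overline{c}^p = 1$ in the (perfect) residue field, so $c \equiv 1$ and $c = 1 + e$ with $v_K(e) > 0$. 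Expanding $a = a'(1+e)^p$ yields
\[
t - t' = (1+t')\sum_{i=1}^{p}\binom{p}{i} e^i .
\]

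The crux is to rule out $v_K(t) > v_K(t')$. If this held, the two summands in $t = t' + (1+t')\sum_{i=1}^{p}\binom{p}{i}e^i$ would have equal valuation, forcing $v_K(t') = v_K\bigl(\sum_{i=1}^{p}\binom{p}{i}e^i\bigr)$. For $1 \le i \le p-1$ the binomial $\binom{p}{i}$ has valuation $e_K$, so the $i$th term has valuation $e_K + i\,v_K(e)$, while the $i=p$ term has valuation $p\,v_K(e)$. Comparing, I find two possibilities: if $(p-1)v_K(e) < e_K$ the sum has valuation $p\,v_K(e)$, whence $p \mid v_K(t')$; if $(p-1)v_K(e) \ge e_K$ the sum has valuation $\ge e_K + v_K(e) \ge \frac{p}{p-1}e_K$, whence $v_K(t') \ge \frac{p}{p-1}e_K$. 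Both contradict the $p$-primitivity of $a'$, which requires $p \nmid v_K(t')$ and $v_K(t') < \frac{p}{p-1}e_K$. Hence $v_K(t') \ge v_K(t)$; and if $v_K(t') = v_K(t)$ then $p \nmid v_K(t)$ and $0 < v_K(t) < \frac{p}{p-1}e_K$, so $a$ is $p$-primitive (the converse being trivial, as one may take $a' = a$). The step demanding the most care is this valuation bookkeeping for the sum — in particular the borderline $(p-1)v_K(e) = e_K$, where the $i=1$ and $i=p$ terms share the minimal valuation $\frac{p}{p-1}e_K$ and may partially cancel; since every term nonetheless has valuation $\ge \frac{p}{p-1}e_K$ there, the sum does too, and the contradiction with $p$-primitivity persists.
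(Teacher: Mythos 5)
Your proof is correct, but it reverses the logical flow of the paper's argument, and its key step is a computation the paper never performs. The paper's proof is two sentences: it observes that the normalization-and-iteration procedure in the proof of Lemma \ref{L2extensions}(i), applied to a non-$p$-primitive $a = 1+t$, yields a $p$-primitive $a' = 1+t'$ with $v_K(t') > v_K(t)$ and $a/a' \in (K^{\times})^p$ --- each replacement step multiplies by a $p$th power and strictly increases $v_K(t)$ --- so the bound and the equality criterion follow at once from Lemma \ref{L2extensions}(ii), and the ``in particular'' clause then drops out by applying that conductor formula to $a'$ and comparing with the bound for $a$. You go the other way: you prove the ``in particular'' clause first, directly, by writing $a = a'(1+e)^p$ (using the algebraically closed residue field to arrange $c \equiv 1$) and showing $v_K(t) > v_K(t')$ is impossible, since it would force $v_K(t') = p\,v_K(e)$ when $(p-1)v_K(e) < e_K$ and $v_K(t') \geq \frac{p}{p-1}e_K$ otherwise, either way contradicting $p$-primitivity; the conductor statements then follow from Lemma \ref{L2extensions}(ii) applied to the iteration's output. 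Your valuation bookkeeping checks out, including the borderline case $(p-1)v_K(e) = e_K$, where possible cancellation between the $i=1$ and $i=p$ terms is harmless because only a lower bound on the valuation of the sum is needed there. What the paper's route buys is brevity: no new computation, just the remark that the already-constructed iteration is valuation-monotone within the Kummer class. What your route buys is a self-contained, conductor-free explanation of why $p$-primitive representatives maximize $v_K(t)$ within a class, together with an explicit case analysis ($v_K(t) \leq 0$, $v_K(t) = 0$, principal units) that the paper leaves implicit --- in particular, you correctly surface the implicit normalization $v_K(a) \geq 0$, without which (e.g.\ $a' = \pi_K^{-1}$, which is literally $p$-primitive under Definition \ref{Dprimitive}) the equality assertion would fail as stated, a point the paper silently glosses as well.
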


\begin{proof}
The proof of Lemma \ref{L2extensions} shows that if $a = 1 + t$ is not $p$-primitive for $K$, then one can find $a'$ $p$-primitive for $K$ such 
that $a' = 1 + t'$ with $v_K(t') > v_K(t)$, and $\frac{a}{a'}$ is a $p$th power in $K$.  The lemma then follows from Lemma \ref{L2extensions}.
\end{proof}

\begin{corollary}\label{C2extensions}
Let $L_0$ be a finite extension of $K_1$.  Let $a = 1+t$ be
$p$-primitive for $L_0$.  For $i > 0$, write $L_i = L_0(\sqrt[p^i]{a})$.
Then $[L_c:L_0] = p^c$ for $c > 0$, and for $0 < i \leq c$, the conductor 
of $L_i/L_{i-1}$ is $\frac{p^i}{p-1} e_{L_0} - v_{L_0}(t)$.
\end{corollary}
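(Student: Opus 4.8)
The plan is to prove this by induction on $i$, at each stage applying Lemma \ref{L2extensions}(ii) to the $\ints/p$-extension $L_i/L_{i-1}$. To do this I must verify two things at each step: that $L_i/L_{i-1}$ is genuinely a degree-$p$ extension (so that $[L_c:L_0] = p^c$), and that an appropriate generator of $L_i$ over $L_{i-1}$ is a $p$th root of a $p$-primitive element of $L_{i-1}$ whose ``$t$-value'' I can track. The natural candidate generator is $b_i := \sqrt[p^{i-1}]{a} = (\sqrt[p^i]{a})^p$, which lies in $L_{i-1}$, and I want to write it as $1 + t_{i-1}$ and show it is $p$-primitive for $L_{i-1}$, then read off the conductor from Lemma \ref{L2extensions}(ii) as $\frac{p}{p-1}e_{L_{i-1}} - v_{L_{i-1}}(t_{i-1})$.

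First I would set up the base case and the valuation bookkeeping. Since $a = 1+t$ is $p$-primitive for $L_0$ with $0 < v_{L_0}(t) < \frac{p}{p-1}e_{L_0}$ (the unit-valuation case $p \nmid v_{L_0}(a)$ would force $a \in (L_0^\times)^p$ only trivially, so I treat the case $a = 1+t$ with $v_{L_0}(t) > 0$; the other case is handled separately and similarly), Lemma \ref{L2extensions}(ii) gives $h_{L_1/L_0} = \frac{p}{p-1}e_{L_0} - v_{L_0}(t)$, which is the claimed formula at $i=1$. The key repeated computation is the ramification index: because $L_i/L_{i-1}$ is totally ramified of degree $p$ (wildly ramified, since $t_{i-1}$ has positive valuation below the threshold), we have $e_{L_i} = p\, e_{L_{i-1}} = p^i e_{L_0}$, and valuations scale as $v_{L_i} = p\, v_{L_{i-1}}$ on elements of $L_{i-1}$.

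For the inductive step I would apply Lemma \ref{Lpthpowers} to control the successive roots. Given that $\sqrt[p^{i-1}]{a} = 1 + t_{i-1}$ with $p \nmid v_{L_{i-1}}(t_{i-1})$ and $v_{L_{i-1}}(t_{i-1}) < \frac{p}{p-1}e_{L_{i-1}}$, Lemma \ref{Lpthpowers} shows $\sqrt[p^i]{a} = 1 + t_i$ with $v_{L_i}(t_i) = \frac{1}{p}v_{L_i}(t_{i-1}) = v_{L_{i-1}}(t_{i-1})$. Tracking this across all levels starting from $v_{L_0}(t)$, one finds $v_{L_{i-1}}(t_{i-1}) = v_{L_0}(t)$ measured in $L_0$, i.e. $v_{L_{i-1}}(t_{i-1}) = p^{i-1} v_{L_{i-1}}(t_{i-1})/p^{i-1}$—more precisely the invariant quantity is $v_{L_0}(t)$ itself, so $v_{L_{i-1}}(t_{i-1}) = v_{L_0}(t)$ when both are normalized to $L_0$, giving $v_{L_{i-1}}(t_{i-1}) = p^{i-1}v_{L_0}(t)/p^{i-1}$. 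Substituting into Lemma \ref{L2extensions}(ii) with $e_{L_{i-1}} = p^{i-1}e_{L_0}$ yields $h_{L_i/L_{i-1}} = \frac{p}{p-1}p^{i-1}e_{L_0} - v_{L_{i-1}}(t_{i-1}) = \frac{p^i}{p-1}e_{L_0} - v_{L_0}(t)$, as claimed.

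The main obstacle I anticipate is confirming $p$-primitivity of $\sqrt[p^{i-1}]{a}$ for $L_{i-1}$ at each stage, which is what licenses the use of Lemma \ref{L2extensions}(ii) (rather than merely the inequality in Lemma \ref{R2extensions}). I need $p \nmid v_{L_{i-1}}(t_{i-1})$; since valuations in $L_{i-1}$ of elements coming from $L_0$ are multiplied by $p^{i-1}$, the raw valuation $v_{L_{i-1}}(t_{i-1})$ will generally be divisible by $p$, so the clean statement here depends on $v_{L_{i-1}}(t_{i-1})$ being reinterpreted correctly relative to a uniformizer of $L_{i-1}$ itself. The careful point is that each $L_i/L_{i-1}$ is built as $L_{i-1}(\sqrt[p]{b_{i-1}})$ with $b_{i-1} = \sqrt[p^{i-1}]{a}$, and I must check that $b_{i-1}$ is $p$-primitive \emph{for $L_{i-1}$}, using Lemma \ref{Lpthpowers} to guarantee both the valuation-divisibility condition and the upper bound $v_{L_{i-1}}(t_{i-1}) < \frac{p}{p-1}e_{L_{i-1}}$; the latter follows since $v_{L_{i-1}}(t_{i-1})$ equals the $L_0$-valuation of $t$ (which is below $\frac{p}{p-1}e_{L_0}$) while $e_{L_{i-1}}$ grows, so the bound is preserved comfortably. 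Once primitivity is secured at every level, the conductor formula and the degree count $[L_c:L_0]=p^c$ follow immediately by the induction.
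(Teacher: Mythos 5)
Your proposal is correct and takes essentially the same route as the paper's own proof: induction on $i$, applying Lemma \ref{L2extensions}(ii) to each step $L_i/L_{i-1}$ and using Lemma \ref{Lpthpowers} to show that $v_{L_i}\left(\sqrt[p^i]{a}-1\right) = v_{L_{i-1}}\left(\sqrt[p^{i-1}]{a}-1\right) = v_{L_0}(t)$ as integers (each valuation normalized to its own field), so that $p$-primitivity and the bound below $\frac{p}{p-1}e$ persist at every level, exactly as the paper does, including the separate treatment of the case $p \nmid v_{L_0}(a)$. The only blemish is the garbled renormalization sentence in your third paragraph, and the worry that $v_{L_{i-1}}(t_{i-1})$ might be divisible by $p$ is moot since $t_{i-1} = \sqrt[p^{i-1}]{a}-1$ does not come from $L_0$; your own resolution via Lemma \ref{Lpthpowers} is the correct (and the paper's) argument.
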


\begin{proof}
If $p \nmid v(a)$, then clearly $[L_c:L_0] = p^c$ and $v_{L_0}(t) = 0$.  Also, $p \nmid v_{L_{i-1}}(\sqrt[p^{i-1}]{a})$. 
Then Lemma \ref{L2extensions} shows that $h_{L_i/L_{i-1}} = \frac{p}{p-1} e_{L_{i-1}} = \frac{p^i}{p-1} e_{L_0}.$

Suppose $v(a) = 0$ and $p \nmid v(t) > 0$.  Since $L_0$ contains the $p$th roots of unity, we know that $L_1/L_0$ is Galois.
So we apply Lemma \ref{L2extensions} to see that $[L_1:L_0] = p$ and $h_{L_1/L_0}$ is as desired.
Choose a $p$th root $\sqrt[p]{a}$.  By Lemma \ref{Lpthpowers}, we have that $v_{L_1}(\sqrt[p]{a} - 1) = \frac{v_{L_1}(t)}{p} = v_{L_0}(t)$, which is prime 
to $p$.  So $\sqrt[p]{a}$ is $p$-primitive for $L_1$.  Applying 
Lemma \ref{L2extensions} to $L_2/L_1$ and $\sqrt[p]{a}$ shows that $[L_2:L_1] = p$ and $$h_{L_2/L_1} = \frac{p}{p-1}e_{L_1} - v_{L_1}(\sqrt[p]{a} - 1) 
= \frac{p^2}{p-1} e_{L_0} - v_{L_0}(t),$$ as desired.  Repeating this process up to reaching $L_c$ yields the corollary.
Note that in each case, $L_i/L_{i-1}$ is Galois, 
\end{proof}

\section{Conductors of a certain class of metabelian extensions}\label{Smain}

Recall that $K_r = K_0(\zeta_{p^r})$. 
Write $v_r$ for the normalized valuation on $K_r$ such that a uniformizer has valuation $1$, that is, $v_r = v_{K_r}$. 

\begin{lemma}\label{L2conductor}
Choose integers $\ell$ and $c$ such that $1 \leq \ell \leq c$.  Let $a  = 1+t \in K_{\ell}$ such that $a$ is
$p$-primitive for $K_{\ell}$.  
Then the conductor of $K_c(\sqrt[p^c]{a})$ over $K_{\ell}(\sqrt[p^c]{a})$ is less than or equal to
$p^{c + \ell - 1} - v_{\ell}(t),$ which is the conductor of $K_{\ell}(\sqrt[p^c]{a})$ over $K_{\ell}(\sqrt[p^{c-1}]{a})$.
\end{lemma}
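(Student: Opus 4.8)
The plan is to interpose the Kummer tower between the two cyclotomic levels $K_{\ell}$ and $K_c$ and to feed it through Corollary \ref{C2compositum} one step at a time. For $\ell \leq j \leq c$ set $F_j = K_{\ell}(\sqrt[p^j]{a})$ and $E_j = K_c(\sqrt[p^j]{a})$, together with $F_0 := K_{\ell}$ and $E_0 := K_c$, so that $E_j = F_j(\zeta_{p^c})$ is Galois over $F_j$ and the two extensions named in the statement are $E_c/F_c = K_c(\sqrt[p^c]{a})/K_{\ell}(\sqrt[p^c]{a})$ and $F_c/F_{c-1}$. Write $x_j = h_{E_j/F_j}$ and $d_j = h_{F_j/F_{j-1}}$. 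Since $F_j/F_{j-1}$ is a single Kummer step, hence a $\ints/p$-extension (Corollary \ref{C2extensions}), since $E_{j-1}/F_{j-1}$ is Galois (it is cyclotomic), and since $E_j = E_{j-1}\cdot F_j$, Corollary \ref{C2compositum} applies to each square with base $F_{j-1}$ and gives the recursion $x_j = p\,x_{j-1} - (p-1)d_j$ when $x_{j-1} > d_j$, and $x_j \leq d_j$ when $x_{j-1} \leq d_j$. The point of this arrangement is that it sidesteps the fact that $F_j/K_{\ell}$ is \emph{not} Galois (its Galois closure is the whole field $E_c$): the Galois input to each compositum square is the cyclotomic extension $E_{j-1}/F_{j-1}$, never a Kummer extension of an insufficiently large cyclotomic field.

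Next I would assemble the two pieces of input data. By Corollary \ref{C2extensions} applied over the base $K_{\ell}$ (for which $a$ is $p$-primitive), $d_j = \frac{p^j}{p-1}e_{K_{\ell}} - v_{\ell}(t) = p^{j+\ell-1} - v_{\ell}(t)$, using $e_{K_{\ell}} = p^{\ell-1}(p-1)$; in particular $d_c = p^{c+\ell-1} - v_{\ell}(t)$ is exactly the conductor $h_{F_c/F_{c-1}}$ asserted in the lemma, which disposes of the final clause. The base of the recursion is $x_0 = h_{K_c/K_{\ell}}$, which I would compute directly from Proposition \ref{Pcyclic} by restricting the lower-numbering filtration of $\Gal(K_c/K_0)$ to $\Gal(K_c/K_{\ell})$ and then applying the Herbrand function; the outcome is $h_{K_c/K_{\ell}} = p^{\ell} + (c-1-\ell)(p-1)p^{\ell-1} - p^{-(c-1-\ell)}$.

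The substance of the lemma is then the inequality $x_c \leq d_c$. Because $d_j$ is strictly increasing in $j$, the recursion has the crucial monotonicity feature that once a step falls into the second branch ($x_{j-1}\leq d_j$, so $x_j \leq d_j < d_{j+1}$) every later step stays there, and the bound $x_{c} \leq d_c$ follows at once. Hence the only dangerous scenario is remaining in the first ("growing") branch for every step up to $c$; there the recursion $x_j = p\,x_{j-1}-(p-1)d_j$ solves in closed form, and the needed estimate $x_{c-1}\leq d_c$ reduces, after dividing by $p^{c-1}$, to the single inequality $h_{K_c/K_{\ell}} + v_{\ell}(t) \leq p^{\ell} + (p-1)p^{\ell-1}(c-1)$. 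This is where the proof really rests, and I expect it to be the main obstacle: it must be verified from the explicit value of $h_{K_c/K_{\ell}}$ above together with the $p$-primitivity bound $v_{\ell}(t) < \frac{p}{p-1}e_{K_{\ell}} = p^{\ell}$, and (as the case $\ell=1$ already shows) the two terms nearly cancel, so neither estimate alone is enough and both must be used simultaneously. Granting this inequality, one obtains $x_c \leq d_c = p^{c+\ell-1} - v_{\ell}(t)$, which is precisely the claim.
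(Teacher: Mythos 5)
Your framework is exactly the paper's, just phrased as a recursion: the same grid of fields ($F_j = K_{\ell}(\sqrt[p^j]{a})$, $E_j = K_c(\sqrt[p^j]{a})$ are the paper's two rows), the same application of Corollary \ref{C2compositum} to each square, the same values $d_j = p^{j+\ell-1} - v_{\ell}(t)$ from Corollary \ref{C2extensions}, the same ``once in the second branch, always in the second branch'' observation (valid because $d_j$ is increasing in $j$; this is the paper's ``if there exists an $i$ with $h_i \leq a_{i+1}$'' case), and the same closed-form solution in the all-first-branch scenario (the paper runs it one step further, through $x_c$, and your reduction to $x_{c-1} \leq d_c$ is equivalent). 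But the proposal has a genuine gap, and it sits at the crux: you never verify the inequality $h_{K_c/K_{\ell}} + v_{\ell}(t) \leq p^{\ell} + (c-1)(p-1)p^{\ell-1}$ to which you reduce the lemma --- you flag it as ``the main obstacle'' and then grant it. That inequality \emph{is} the lemma; it is precisely what the paper's displayed chain of (in)equalities establishes. For the record, it is true: with the correct value of $x_0 = h_{K_c/K_{\ell}}$ it is equivalent to $v_{\ell}(t) \leq \ell(p-1)p^{\ell-1} + 1$, which follows from the $p$-primitivity bound $v_{\ell}(t) \leq p^{\ell}-1$ since $p^{\ell} - 1 \leq \ell(p-1)p^{\ell-1} + 1$ for all $\ell \geq 1$ --- with slack exactly $1$ when $\ell = 1$, confirming your own suspicion that the estimate is delicate and that both inputs must be used simultaneously.

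Compounding this, the input you feed into the reduction is wrong: your claimed value $h_{K_c/K_{\ell}} = p^{\ell} + (c-1-\ell)(p-1)p^{\ell-1} - p^{-(c-1-\ell)}$ cannot be correct, because $K_c/K_{\ell}$ is abelian, so by Hasse--Arf its conductor is an integer, while your expression is not once $c > \ell + 1$. Computing $\phi_{K_c/K_{\ell}}$ from the restricted filtration $H_i = H \cap G_i$ of Proposition \ref{Pcyclic} (the jumps of $H = \Gal(K_c/K_{\ell})$ occur at $p^{\ell}-1, p^{\ell+1}-1, \ldots, p^{c-1}-1$) gives
$$h_{K_c/K_{\ell}} = (p^{\ell}-1) + \sum_{m=\ell}^{c-2} \frac{p^m(p-1)}{p^{m+1-\ell}} = p^{\ell} - 1 + (c-1-\ell)(p-1)p^{\ell-1} = p^{\ell-1}\bigl((c-\ell)(p-1)+1\bigr) - 1,$$
which is the paper's $h_0$; your formula replaces the $-1$ by $-p^{-(c-1-\ell)}$, so the two agree only when $c = \ell+1$. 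By luck the error overestimates by less than $1$, so the key inequality would still (barely) survive your value of $x_0$ --- but since you verified neither the formula nor the inequality, the proof as written consists of a correct reduction fed by an erroneous computation into an unproven estimate, at exactly the point where, as you say yourself, the two terms nearly cancel.
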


\begin{proof}
If $c = \ell$, the lemma follows from Corollary \ref{C2extensions}, so assume $c > \ell$.
For each $i$, $0 \leq i \leq c$, let $h_i$ be the conductor of $K_c(\sqrt[p^i]{a})$ over $K_{\ell}(\sqrt[p^i]{a})$.  Then, using 
Proposition \ref{Pcyclic}, one calculates $h_0 = p^{\ell-1}((c- \ell)(p-1) + 1) - 1$. 
Furthermore, let $a_i$ be the conductor of $K_{\ell}(\sqrt[p^i]{a})$ over $K_{\ell}(\sqrt[p^{i-1}]{a})$.  
By Corollary \ref{C2extensions}, we have $a_i = p^{i + \ell - 1} - v_{\ell}(t).$  Note that $v_{\ell}(t) < \frac{p}{p-1}e_{K_{\ell}} = 
p^{\ell}$.  We must show that $h_c \leq a_c$.
Our diagram of field extensions and conductors looks like this:

\[
\xymatrix{
K_{c} \ar@{^{(}->}[r] & K_{c}(\sqrt[p]{a})  \ar@{^{(}->}[r] & \cdots  \ar@{^{(}->}[r] & 
K_{c}(\sqrt[p^{c-1}]{a})  \ar@{^{(}->}[r] & K_{c}(\sqrt[p^c]{a}) 
\\
K_{\ell}  \ar@{^{(}->}[r]^-{a_1}  \ar@{^{(}->}[u]^{h_0} & K_{\ell}(\sqrt[p]{a})  \ar@{^{(}->}[r]^-{a_2}
\ar@{^{(}->}[u]^{h_1} & 
\cdots \ar@{^{(}->}[r]^-{a_{c-1}} & K_{\ell}(\sqrt[p^{c-1}]{a})  \ar@{^{(}->}[r]^-{a_c} \ar@{^{(}->}[u]^{h_{c-1}}  & 
K_{\ell}(\sqrt[p^c]{a}).  \ar@{^{(}->}[u]^{h_c}}
\]

If there exists an $i$, $0 \leq i < c$, such that $h_i \leq a_{i+1}$, then repeated application of Corollary \ref{C2compositum} shows that
$h_c \leq  a_c$.  So assume otherwise.  Then we have the chain of (in)equalities below 
(the first comes from repeated application of Corollary \ref{C2compositum}):
\begin{eqnarray*}
h_c &=& p^ch_0 - p^{c-1}(p-1)a_1 - p^{c-2}(p-1)a_2 - \cdots - p(p-1)a_{c-1} - (p-1)a_c \\
&=& p^ch_0 - c(p-1)(p^{c + \ell - 1}) + (p^c-1)v_{\ell}(t) \\
&=& p^{c + \ell - 1}(1 - \ell(p-1)) + (p^c - 1)v_{\ell}(t) - p^c \\
&=& a_c - \ell(p-1)(p^{c + \ell -1}) + p^cv_{\ell}(t) - p^c \\
&<& a_c - \ell(p-1)(p^{c + \ell -1}) + p^{c+\ell} - p^c \\
&=& a_c + p^{c+\ell}(1 - \frac{\ell(p-1)}{p} - \frac{1}{p^{\ell}}) \\
&\leq& a_c. 
\end{eqnarray*}
\end{proof}

\begin{prop}\label{P2conductor}
Choose integers $\ell \geq 1$ and $c \geq 1$.  Let $a  = 1+t \in K_{\ell}$ such that $a$ is
$p$-primitive for $K_{\ell}$.  Write $K = K_{\max(\ell, c)}(\sqrt[p^c]{a})$.  
Then $K$ is Galois over $K_{\ell}$.  Write $L$ for the Galois closure of $K$ over $K_0$.
\begin{enumerate}[(i)]
\item The conductor of $K/K_{\ell}$ is $(c(p-1)+1)p^{\ell-1} - v_{\ell}(t)$.
\item The conductor of $L/K_0$ is $\max(\ell - 1, c + \ell - 1 - \frac{v_{\ell}(t) - 1}{p^{\ell -1}(p-1)}).$
\end{enumerate}
\end{prop}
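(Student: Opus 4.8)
The plan is to deduce (ii) formally from (i), and to obtain (i) from the Kummer-tower conductor computations already in hand, treating $c \leq \ell$ and $c > \ell$ separately. Throughout I write $s = v_{\ell}(t)$ and use $e_{K_{\ell}} = p^{\ell-1}(p-1)$, so that Corollary \ref{C2extensions}, applied to the tower $K_{\ell} \subset K_{\ell}(\sqrt[p]{a}) \subset \cdots$, gives the step conductors $a_i = h_{K_{\ell}(\sqrt[p^i]{a})/K_{\ell}(\sqrt[p^{i-1}]{a})} = p^{i+\ell-1} - s$. First I note that $K/K_{\ell}$ is Galois: the conjugates of $\sqrt[p^c]{a}$ over $K_{\ell}$ are the $\zeta_{p^c}^j \sqrt[p^c]{a}$, which lie in $K$ because $\zeta_{p^c} \in K_{\max(\ell,c)}$, and $K_{\max(\ell,c)}/K_{\ell}$ is itself normal.

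For (i) when $c \leq \ell$ I have $K = K_{\ell}(\sqrt[p^c]{a})$, a cyclic $\ints/p^c$-extension, and I would argue by induction on the number of Kummer levels. Corollary \ref{C2extensions} gives the step conductors $a_i = p^{i+\ell-1} - s$; moreover $\sqrt[p]{a}$ is $p$-primitive for $K_{\ell}(\sqrt[p]{a})$ with $t$-valuation again $s$ (Lemma \ref{Lpthpowers}), so the same formula governs the tower one level up. Applying Corollary \ref{C2goingdown} to $K_{\ell} \subset K_{\ell}(\sqrt[p]{a}) \subset K$ together with the inductive value of $h_{K/K_{\ell}(\sqrt[p]{a})}$, the maximum collapses to its second argument and yields $h_{K/K_{\ell}} = (c(p-1)+1)p^{\ell-1} - s$.

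For (i) when $c > \ell$ one has $K = K_c(\sqrt[p^c]{a})$, and I would use the tower $K_{\ell} \subset K_c \subset K$ with Lemma \ref{Lramintowers}. Here Proposition \ref{Pcyclic} supplies the cyclotomic data $h_{K_c/K_{\ell}} = ((c-\ell)(p-1)+1)p^{\ell-1} - 1$ and $\ell_{K_c/K_{\ell}} = p^{c-1} - 1$, so the only missing ingredient is $h_{K/K_c}$, and this is where I expect the main obstacle to lie. Over $K_c$ the element $a$ is no longer $p$-primitive, since its $t$-valuation $p^{c-\ell}s$ is divisible by $p$; hence Corollary \ref{C2extensions} does not apply to the tower $K_c \subset K_c(\sqrt[p]{a}) \subset \cdots \subset K$, and Lemma \ref{R2extensions} only bounds its step conductors. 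The remedy is the grid of fields $K_m(\sqrt[p^j]{a})$ underlying Lemma \ref{L2conductor}: the bottom-row step conductors $a_i$ over $K_{\ell}$ are known exactly, and working square by square with Corollary \ref{C2compositum}---exactly the bookkeeping in the proof of Lemma \ref{L2conductor}---pins down the vertical conductors, hence the top-row step conductors over $K_c$, exactly. Feeding these into Corollary \ref{C2goingdown} gives $h_{K/K_c}$, after which Lemma \ref{Lramintowers} returns $h_{K/K_{\ell}} = (c(p-1)+1)p^{\ell-1} - s$, the same formula as before. The delicate point is to verify that the cyclotomic contribution dominates, so that the grid computation produces an exact value rather than merely the bound of Lemma \ref{R2extensions}.

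Finally, (ii) follows formally. Since $L/K_0$ is Galois and contains $K_{\ell}$, Corollary \ref{C2rootsofunity} expresses $h_{L/K_0}$ in terms of $h_{L/K_{\ell}}$. To evaluate the latter, observe that $L$ is the compositum over $K_{\ell}$ of the conjugate fields $K_{\max(\ell,c)}(\sqrt[p^c]{\sigma(a)})$ for $\sigma \in \Gal(K_{\ell}/K_0)$; each $\sigma(a) = 1 + \sigma(t)$ is $p$-primitive for $K_{\ell}$ with $v_{\ell}(\sigma(t)) = s$, so by (i) each conjugate field has conductor $(c(p-1)+1)p^{\ell-1} - s$ over $K_{\ell}$, and Lemma \ref{Lcompositum} gives $h_{L/K_{\ell}} = (c(p-1)+1)p^{\ell-1} - s$. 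Substituting into Corollary \ref{C2rootsofunity} and simplifying yields $h_{L/K_0} = \max(\ell-1,\ c+\ell-1 - \frac{s-1}{(p-1)p^{\ell-1}})$, as claimed.
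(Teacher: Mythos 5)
Your part (ii) and your treatment of the case $c \leq \ell$ match the paper: the paper also gets $h_{L/K_{\ell}} = h_{K/K_{\ell}}$ from conjugates plus Lemma \ref{Lcompositum} and then applies Corollary \ref{C2rootsofunity}, and its $c < \ell$ computation is exactly your downward recursion with Corollary \ref{C2goingdown}, unrolled into the closed form $h_{K/K_{\ell}} = \frac{1}{p^{c-1}}a_c + \sum_{i=1}^{c-1}\frac{p-1}{p^i}a_i$. The gap is in the main case $c > \ell$. Your route through the tower $K_{\ell} \subset K_c \subset K$ via Lemma \ref{Lramintowers} requires the \emph{exact} value of $h_{K/K_c}$, and your proposed source for it --- the grid bookkeeping from the proof of Lemma \ref{L2conductor} --- cannot deliver it. Corollary \ref{C2compositum} is exact only in the branch $h_{L/K} > h_{K'/K}$; whenever a square of the grid falls into the other branch, one gets merely the inequality $h_{L'/K'} \leq h_{K'/K}$. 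Indeed, the proof of Lemma \ref{L2conductor} is organized precisely as a dichotomy: either some $h_i \leq a_{i+1}$, in which case only the bound $h_c \leq a_c$ comes out, or all $h_i > a_{i+1}$, which the long chain of inequalities shows forces $h_c < a_c$; in neither branch are the vertical conductors ``pinned down exactly'' in general. The degenerate branch genuinely occurs: with $v_{\ell}(t) = 1$ and $c = \ell + 1$ one has $h_0 = p^{\ell-1}\bigl((c-\ell)(p-1)+1\bigr) - 1 = p^{\ell} - 1 = a_1$, so already the first square gives only $h_1 \leq a_1$, and neither the verticals nor the top-row step conductors over $K_c$ (where $a$ is no longer $p$-primitive, as you correctly note) are determined. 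So the ``delicate point'' you flag is not a verification to be carried out but an actual obstruction to this route with the tools cited.

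The paper avoids the issue by never passing through $K_c$: also when $c > \ell$ it climbs the Kummer tower over $K_{\ell}$, and it uses Lemma \ref{L2conductor} only as the one-sided bound $h_{K/K_{\ell}(\sqrt[p^c]{a})} \leq a_c$ at the very top. That inequality suffices because Corollary \ref{C2goingdown}, applied to $K_{\ell}(\sqrt[p^{c-1}]{a}) \subseteq K_{\ell}(\sqrt[p^{c}]{a}) \subseteq K$, gives
$$h_{K/K_{\ell}(\sqrt[p^{c-1}]{a})} = \max\left(a_c,\ \frac{p-1}{p}a_c + \frac{1}{p}\,h_{K/K_{\ell}(\sqrt[p^c]{a})}\right) = a_c$$
regardless of the exact value of the cyclotomic contribution; the remaining descent to $K_{\ell}$ is then identical to your $c \leq \ell$ computation, so both cases are handled uniformly. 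If you insist on your tower, note that $h_{K/K_c}$ can only be recovered \emph{after} the fact, by feeding the paper's value of $h_{K/K_{\ell}}$ into Lemma \ref{Lramintowers} (one checks $h_{K/K_{\ell}} > h_{K_c/K_{\ell}}$ since $\ell(p-1)p^{\ell-1} + 1 > v_{\ell}(t)$); as an input to the proof it is out of reach.
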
	

\begin{proof}
The extension $K/K_{\ell}$ is clearly Galois.  Let the $a_i$ be defined as in the proof of Lemma \ref{L2conductor}.  Then 
as in that proof, we have $a_i = p^{i + \ell + 1} - v_{\ell}(t)$.
\\
\\
\emph{To (i):}
If $c \geq \ell$, then 
by Corollary \ref{C2goingdown} applied to $K_{\ell}(\sqrt[p^{c-1}]{a}) \subseteq K_{\ell}(\sqrt[p^{c}]{a}) \subseteq 
K$, and using Lemma \ref{L2conductor}, we have that the conductor of $K$ over $K_{\ell}(\sqrt[p^{c-1}]{a})$ is 
$a_c$.  If $c < \ell$, then the conductor of $K$ over $K_{\ell}(\sqrt[p^{c-1}]{a})$ is $a_c$ by definition.
In both cases, applying Corollary \ref{C2goingdown} repeatedly to the extensions
$K_{\ell}(\sqrt[p^{i-1}]{a}) \subseteq K_{\ell}(\sqrt[p^{i}]{a}) \subseteq K$ as $i$ ranges from 
$c-1$ to $1$, we obtain that the conductor of $K/K_{\ell}$ is 
$$\frac{1}{p^{c-1}}a_c + \sum_{i=1}^{c-1}\frac{p-1}{p^i}a_i.$$  This is equal to $(c(p-1)+1)p^{\ell-1} - v_{\ell}(t)$. 
\\
\\
\emph{To (ii)}  Since $K$ is Galois over $K_{\ell}$, its Galois closure $L$ over $K_0$ is a compositum of conjugate 
extensions, each with the same conductor over $K_{\ell}$.  By Lemma \ref{Lcompositum} and part (i) of this proposition, 
$h_{L/K_{\ell}} =  h_{K/K_{\ell}} = (c(p-1)+1)p^{\ell-1} - v_{\ell}(t)$.  By Corollary \ref{C2rootsofunity}, we obtain
$$h_{L/K_0} = \max\left(\ell - 1, c + \ell - 1 - \frac{v_{\ell}(t) - 1}{p^{\ell-1}(p-1)}\right).$$
\end{proof}

\begin{corollary}\label{C2specific}
Choose integers $\ell \geq 1$ and $c \geq 1$.  Let $\alpha \in K_{\ell}$, not necessarily p-primitive for $K_{\ell}$, with $v_{\ell}(\alpha) \geq 0$.
Write $L$ for the Galois closure of $K := K_{\max(\ell, c)}(\sqrt[p^c]{\alpha})$ over $K_0$.    
We know $\alpha = \alpha'\beta^p$, with either $\alpha' = 1$ or $\alpha'$ $p$-primitive for $K_{\ell}$, and $\beta \in K_{\ell}$ with $v_{\ell}(\beta) \geq 0$.  
Write $\alpha' = 1 + t_{\alpha'}$ and $\beta = 1 + t_{\beta}$.  
Then $$h_{L/K_0} \leq \mu := \max\left(\max(\ell, c) - 1, c + \ell - 1 - \frac{v_{\ell}(t_{\alpha'}) - 1}{p^{\ell -1}(p-1)},
c + \ell - 2 - \frac{v_{\ell}(t_{\beta}) - 1}{p^{\ell -1}(p-1)}\right).$$
\end{corollary}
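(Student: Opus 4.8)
The plan is to reduce everything to the $p$-primitive case of Proposition \ref{P2conductor} by factoring the radical. Since $\alpha = \alpha'\beta^p$, for compatible choices of roots we have $\sqrt[p^c]{\alpha} = \sqrt[p^c]{\alpha'}\cdot\sqrt[p^{c-1}]{\beta}$, so $K = K_{\max(\ell,c)}(\sqrt[p^c]{\alpha})$ lies inside $\tilde{K} := K_{\max(\ell,c)}(\sqrt[p^c]{\alpha'},\sqrt[p^{c-1}]{\beta})$. Hence the Galois closure $L$ of $K$ over $K_0$ is contained in the Galois closure $\tilde{L}$ of $\tilde{K}$ over $K_0$; as both are Galois over $K_0$, invariance of the upper numbering under quotients gives $h_{L/K_0} \le h_{\tilde{L}/K_0}$, and it suffices to bound $h_{\tilde{L}/K_0}$.

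I would then split $\tilde{L}$ as a compositum. Let $N_1$ be the Galois closure over $K_0$ of $K_{\max(\ell,c)}(\sqrt[p^c]{\alpha'})$ and $N_2$ that of $K_{\max(\ell,c)}(\sqrt[p^{c-1}]{\beta})$. Since the Galois closure of a compositum is the compositum of the Galois closures, $\tilde{L} = N_1 N_2$, so Lemma \ref{Lcompositum} gives $h_{\tilde{L}/K_0} = \max(h_{N_1/K_0}, h_{N_2/K_0})$. The factor $N_1$ is immediate from Proposition \ref{P2conductor}(ii), as $\alpha'$ is $1$ or $p$-primitive for $K_{\ell}$; it contributes exactly the first two entries of $\mu$ (with $h_{N_1/K_0} = \max(\ell,c)-1$ in the degenerate case $\alpha' = 1$).

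The real work is the factor $N_2$, where the exponent is only $c-1$ but $\beta$ need not be $p$-primitive; I would argue by induction on $c$ (the base case $c=1$ being immediate, as then $\sqrt[p^{c-1}]{\beta} = \beta \in K_{\ell}$ and $N_2 = K_{\ell}$). First I strip the extra roots of unity: $K_{\max(\ell,c)}(\sqrt[p^{c-1}]{\beta})$ is the compositum of $K_{\max(\ell,c)}$ with $K_{\max(\ell,c-1)}(\sqrt[p^{c-1}]{\beta})$, so Lemma \ref{Lcompositum} gives $h_{N_2/K_0} = \max(\max(\ell,c)-1,\, h_{N_2''/K_0})$, where $N_2''$ is the Galois closure of $K_{\max(\ell,c-1)}(\sqrt[p^{c-1}]{\beta})$ over $K_0$. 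Applying the inductive hypothesis (the corollary at $c-1$) to $\beta$, with a factorization $\beta = \beta'\gamma^p$ coming from the normalization of Lemma \ref{L2extensions}(i) (so $\beta'$ is $1$ or $p$-primitive and $\gamma = 1+t_{\gamma}\in K_{\ell}$), bounds $h_{N_2''/K_0}$ by a three-entry maximum that I compare against $\mu$. The first entry $\max(\ell,c-1)-1$ is at most $\max(\ell,c)-1$. The entry from $\beta'$ equals $c+\ell-2 - \frac{v_{\ell}(t_{\beta'})-1}{p^{\ell-1}(p-1)}$, which is at most the third entry of $\mu$ since $v_{\ell}(t_{\beta'}) \ge v_{\ell}(t_{\beta})$ by Lemma \ref{R2extensions}. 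The entry from $\gamma$ equals $c+\ell-3 - \frac{v_{\ell}(t_{\gamma})-1}{p^{\ell-1}(p-1)}$, and bounding it by the third entry of $\mu$ requires precisely $v_{\ell}(t_{\beta}) - v_{\ell}(t_{\gamma}) \le e_{K_{\ell}} = p^{\ell-1}(p-1)$.

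I expect this last inequality to be the main obstacle, and it is where the behavior of $p$-th powers enters. When $\beta$ is not a $p$-th power, \cite[\S 0.3]{Ep:wr} forces $v_{\ell}(t_{\beta}) < \frac{p}{p-1}e_{K_{\ell}}$, while the normalization gives $v_{\ell}(t_{\gamma}) = v_{\ell}(t_{\beta})/p$, so $v_{\ell}(t_{\beta}) - v_{\ell}(t_{\gamma}) = \frac{p-1}{p}v_{\ell}(t_{\beta}) < e_{K_{\ell}}$. When $\beta$ is a $p$-th power (so $\beta'=1$) one must instead analyze $v_{\ell}(\gamma^p-1)$ directly; the one troublesome subcase is $v_{\ell}(t_{\gamma}) = e_{K_{\ell}}/(p-1)$ with cancellation between the $p\,t_{\gamma}$ and $t_{\gamma}^p$ terms, which I would eliminate by choosing $\gamma$ among its $\zeta_p$-multiples so as to maximize $v_{\ell}(t_{\gamma})$ (a pigeonhole count on the residues of $\zeta_p^j\gamma-1$ shows at least one choice avoids the cancellation). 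Granting the inequality, the three comparisons give $h_{N_2/K_0} \le \max\bigl(\max(\ell,c)-1,\, c+\ell-2-\frac{v_{\ell}(t_{\beta})-1}{p^{\ell-1}(p-1)}\bigr)$, and combining with the bound on $h_{N_1/K_0}$ yields $h_{L/K_0} \le h_{\tilde{L}/K_0} \le \mu$.
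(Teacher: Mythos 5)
Your proof is correct, and its outer layer coincides with the paper's: the same embedding of $L$ into the Galois closure of $K_{\max(\ell,c)}(\sqrt[p^c]{\alpha'},\sqrt[p^{c-1}]{\beta})$, the same splitting via Lemma \ref{Lcompositum}, the same appeal to Proposition \ref{P2conductor}(ii) (or Proposition \ref{Pcyclic} when $\alpha'=1$) for the $\alpha'$-factor, and the same use of Lemma \ref{R2extensions} to get $v_{\ell}(t_{\beta'}) \geq v_{\ell}(t_{\beta})$ for the $\beta'$-factor. The genuine divergence is in the treatment of the layers at depth two and beyond. You run the induction on the corollary itself, which forces the sharp per-layer loss estimate $v_{\ell}(t_{\beta}) - v_{\ell}(t_{\gamma}) \leq e_{K_{\ell}} = p^{\ell-1}(p-1)$, and this in turn requires an ingredient the paper never needs: when $\beta = \gamma^p$, replacing $\gamma$ by the $p$th root $\zeta_p^j\gamma$ nearest to $1$. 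Your pigeonhole sketch for this is right, and can be done in one stroke: the $p$ roots are pairwise at distance exactly $v_{\ell}(\zeta_p - 1) = p^{\ell-1}$, so at most one factor in $1 - \beta = \prod_{j}(1 - \zeta_p^j\gamma)$ has valuation exceeding $p^{\ell-1}$, and summing valuations gives $v_{\ell}(t_{\beta}) \leq \max_j v_{\ell}(\zeta_p^j\gamma - 1) + e_{K_{\ell}}$; the twist is legitimate because the corollary's bound is asserted relative to a chosen factorization, and $(\zeta_p^j\gamma)^p = \gamma^p$ with $\zeta_p^j\gamma \in K_{\ell}$. The paper instead inducts on a much cruder auxiliary statement: at depth two it uses only $v_{\ell}(t_{\gamma'}) \geq 0$, proving $h_{M'/K_0} \leq \max(\ell-1,\, c+\ell-3+\frac{1}{p^{\ell-1}(p-1)})$, and then absorbs this into $\mu$ through the \emph{second} entry, using $v_{\ell}(t_{\alpha'}) < p^{\ell}$ (valid since $\alpha'$ is $p$-primitive). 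So the trade-off is: the paper avoids any root-selection analysis at the cost of leaning on the $p$-primitivity of $\alpha'$ in its final comparison, while your comparison runs through the \emph{third} entry of $\mu$ (the $t_{\beta}$-entry) and therefore works uniformly, including in the degenerate case $\alpha' = 1$, where the paper's justification is vacuous and the second entry of $\mu$ must be read as absent. One small caveat, not a gap: your identity $v_{\ell}(t_{\gamma}) = v_{\ell}(t_{\beta})/p$ in the non-$p$th-power case holds only when the normalization of Lemma \ref{L2extensions}(i) actually fires, i.e.\ when $0 < v_{\ell}(t_{\beta})$ with $p \mid v_{\ell}(t_{\beta})$; in the remaining sub-cases one has $\gamma = 1$ or $v_{\ell}(t_{\beta}) = 0$ and the needed inequality is trivial, but this deserves a sentence in a final write-up.
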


\begin{proof}
 It is clear that $L$ can be embedded into the Galois closure of $K_{\max(\ell, c)}(\sqrt[p^c]{\alpha'}, 
\sqrt[p^{c-1}]{\beta})$ over $K_0$.
By Lemma \ref{Lcompositum}, it suffices to show that the Galois closures $L'$ of $K_{\max(\ell, c)}(\sqrt[p^c]{\alpha'})$ and
$L''$ of $K_{\max(\ell, c-1)}(\sqrt[p^{c-1}]{\beta})$ over $K_0$ satisfy $h_{L'/K_0} \leq \mu$ and $h_{L''/K_0} \leq \mu$.  By 
Proposition \ref{P2conductor} (ii), $h_{L'/K_0} = \max\left(\ell - 1, c + \ell - 1 - \frac{v_{\ell}(t_{\alpha'}) - 1}{p^{\ell -1}(p-1)}\right) \leq \mu$
when $\alpha'$ is $p$-primitive for $K_{\ell}$.  By Proposition \ref{Pcyclic}, $h_{L'/K_0} = \max(\ell, c) - 1 \leq \mu$ 
when $\alpha' = 1$.

If $c=1$ we are done, so assume $c \geq 2$.  Pick $\beta'$, $\gamma \in K_{\ell}$ such that $\beta = \beta'\gamma^p$ and either 
$\beta' = 1$ or $\beta'$ is
$p$-primitive for $K_{\ell}$.  Then $L''$ can be embedded into the compositum of the Galois closures
$M$ of $K_{\max(\ell, c-1)}(\sqrt[p^{c-1}]{\beta'})$ and $M'$ of $K_{\max(\ell, c-2)}(\sqrt[p^{c-2}]{\gamma})$ over $K_0$.  If $\beta' = 1$, then
$h_{M/K_0} = \max(\ell-1, c-2) \leq \mu$.  
If $\beta'$ is $p$-primitive for $K_{\ell}$, then $\beta' = 1 + t_{\beta'}$ with
$v_{\ell}(t_{\beta'}) \geq v_{\ell}(t_{\beta})$.  We then have, by Proposition \ref{P2conductor}, that  
$h_{M/K_0} = \max(\ell - 1, c + \ell - 2 - \frac{v_{\ell}(t_{\beta'}) - 1}{p^{\ell-1}(p-1)}) \leq \mu$.

If $c=2$ we are done, so assume $c \geq 3$.
By Lemma \ref{Lcompositum}, it remains to prove that $h_{M'/K_0} \leq \mu$.  We prove by induction on $c$ that 
$$h_{M'/K_0} \leq  \max(\ell-1, c+\ell - 3 + \frac{1}{p^{\ell-1}(p-1)}),$$ which is less than $\mu$ because $v_{\ell}(t_{\alpha'}) < p^{\ell}$.  
Write $\gamma = \gamma'\delta^p$, where either $\gamma'=1$ or $\gamma'$ is $p$-primitive for $K_{\ell}$.
If $c=3$, then $M'$ is the Galois closure of  $K_{\max(\ell, c-2)}(\sqrt[p^{c-2}]{\gamma'})$ over $K_0$, 
and we conclude by Proposition \ref{P2conductor}.
If $c > 3$, then $M'$ is contained in the compositum of the Galois closures $N$ of $K_{\max(\ell, c-2)}(\sqrt[p^{c-2}]{\gamma'})$ and 
$N'$ of $K_{\max(\ell, c-3)}(\sqrt[p^{c-3}]{\delta})$ over $K_0$.  
By Proposition \ref{P2conductor}, $h_{N/K_0} \leq \max(\ell-1, c + \ell - 3 + \frac{1}{p^{\ell-1}(p-1)})$,
and by the induction hypothesis, the same holds for $h_{N'/K_0}$.  We conclude using Lemma \ref{Lcompositum}.
\end{proof}

The following version of Corollary \ref{C2specific} will be useful in \S\ref{Schar2} and \cite{Ob:fm}.

\begin{corollary}\label{Ccruder}
Choose integers $\ell \geq 1$ and $c \geq 1$.  Let $\alpha \in K_{\ell}$, not necessarily p-primitive for $K_{\ell}$, with $v_{\ell}(\alpha) \geq 0$.
Write $L$ for the Galois closure of $K := K_d(\sqrt[p^c]{\alpha})$ over $K_0$, where $d \geq \max(\ell, c)$.    
Write $\alpha = 1 + t_{\alpha}$.  
Then $$h_{L/K_0} \leq \mu := \max\left(d-1, c + \ell - 1 - \frac{v_{\ell}(t_{\alpha}) - 1}{p^{\ell -1}(p-1)},
c + \ell - 2 + \frac{1}{p^{\ell -1}(p-1)}\right).$$
\end{corollary}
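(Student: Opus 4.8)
The plan is to deduce this directly from Corollary \ref{C2specific}, which already gives the desired estimate when the base field is $K_{\max(\ell,c)}$ and when one retains the finer decomposition $\alpha = \alpha'\beta^p$. Two discrepancies must be reconciled: here the base field is the possibly larger cyclotomic field $K_d$ with $d \geq \max(\ell,c)$, and the bound $\mu$ is phrased purely in terms of $v_\ell(t_\alpha)$ rather than the valuations $v_\ell(t_{\alpha'})$ and $v_\ell(t_\beta)$ that appear in Corollary \ref{C2specific}.

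First I would dispose of the base-field discrepancy. Let $L_0$ denote the Galois closure of $K_{\max(\ell,c)}(\sqrt[p^c]{\alpha})$ over $K_0$, to which Corollary \ref{C2specific} applies. Since $d \geq \max(\ell,c)$, we have $K_d(\sqrt[p^c]{\alpha}) = K_d \cdot K_{\max(\ell,c)}(\sqrt[p^c]{\alpha}) \subseteq K_d \cdot L_0$, and $K_d \cdot L_0$ is Galois over $K_0$ as a compositum of Galois extensions; hence $L \subseteq K_d \cdot L_0$. Because the upper numbering is invariant under quotients, $h_{L/K_0} \leq h_{K_d \cdot L_0/K_0}$, and Lemma \ref{Lcompositum} together with Proposition \ref{Pcyclic} gives $h_{K_d \cdot L_0 / K_0} = \max(h_{K_d/K_0}, h_{L_0/K_0}) = \max(d-1, h_{L_0/K_0})$. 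Thus it suffices to show that $h_{L_0/K_0}$ is at most $\mu$.

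Next I would feed the decomposition $\alpha = \alpha'\beta^p$ into Corollary \ref{C2specific} to obtain $h_{L_0/K_0} \leq \mu_0$, where $\mu_0$ is the maximum of $\max(\ell,c)-1$, the term involving $v_\ell(t_{\alpha'})$, and the term involving $v_\ell(t_\beta)$, and then compare $\mu_0$ with $\mu$ term by term. The term $\max(\ell,c)-1$ is at most $d-1$ since $d \geq \max(\ell,c)$. For the $\alpha'$ term, the decisive inequality is $v_\ell(t_{\alpha'}) \geq v_\ell(t_\alpha)$: when $\alpha$ is a unit this is exactly the last assertion of Lemma \ref{R2extensions}, applied to the $p$-primitive element $\alpha' = 1 + t_{\alpha'}$ and the $p$th power $\alpha/\alpha' = \beta^p$, while when $v_\ell(\alpha) > 0$ one has $v_\ell(t_\alpha) = 0 \leq v_\ell(t_{\alpha'})$ trivially; since the coefficient of the valuation is negative, this makes the $\alpha'$ term of $\mu_0$ at most the $v_\ell(t_\alpha)$ term of $\mu$. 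Finally, for the $\beta$ term, the inequality $v_\ell(t_\beta) \geq 0$ (forced by $v_\ell(\beta) \geq 0$) shows that $c+\ell-2-\frac{v_\ell(t_\beta)-1}{p^{\ell-1}(p-1)} \leq c+\ell-2+\frac{1}{p^{\ell-1}(p-1)}$. Combining these three comparisons yields $\max(d-1,\mu_0) \leq \mu$, completing the reduction.

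I expect no serious obstacle: this is a bookkeeping reduction to Corollary \ref{C2specific}. The only points requiring genuine care are the valuation comparison $v_\ell(t_{\alpha'}) \geq v_\ell(t_\alpha)$, the conventions for the degenerate cases $\alpha'=1$ or $\beta=1$ (where the corresponding $t$ vanishes, so the associated term is read as $-\infty$ and drops out), and the verification that enlarging the base from $K_{\max(\ell,c)}$ to $K_d$ affects only the cyclotomic contribution $d-1$ and leaves the Kummer contributions (the $\alpha'$ and $\beta$ terms) untouched.
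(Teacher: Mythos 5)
Your proof is correct and follows the paper's own argument: the paper likewise deduces the bound from Corollary \ref{C2specific} via the inequalities $v_{\ell}(t_{\alpha'}) \geq v_{\ell}(t_{\alpha})$ (Lemma \ref{R2extensions}) and $v_{\ell}(t_{\beta}) \geq 0$, handling the enlarged base $K_d$ with Lemma \ref{Lcompositum} and $h_{K_d/K_0} = d-1$ from Proposition \ref{Pcyclic}. You have merely spelled out the compositum step and the degenerate cases ($\alpha'=1$, $v_{\ell}(\alpha)>0$) more explicitly than the paper does, which is fine.
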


\begin{proof}
By Lemma \ref{R2extensions}, we have $v_{\ell}(t_{\alpha'}) \geq v_{\ell}(t_{\alpha})$, where $t_{\alpha'}$ is from Corollary \ref{C2specific}.  Furthermore,
$v_{\ell}(t_{\beta}) \geq 0$, where $t_{\beta}$ is from Corollary \ref{C2specific}.  So our corollary follows form Corollary \ref{C2specific}, along with 
Lemma \ref{Lcompositum} and the fact that $h_{K_d/K_0} = d-1$ (Proposition \ref{Pcyclic}).
\end{proof}

\section{Extensions of the form $K_c(\sqrt[2^c]{a})$}\label{Schar2}
In this section, we assume $p=2$.  In order to understand the higher ramification groups above $2$ in an extension 
$\rats(\zeta_{2^c}, \sqrt[2^c]{a})/\rats$, when $a \in \rats$, it suffices, as mentioned in the introduction, 
to make a base change to the completion of the maximal unramified extension
of $\rats_2$ (this is $K_0$, when $k = \ol{\FF_2}$).  We work in the more general context of an extension 
$K = K_c(\sqrt[2^c]{a})/K_0$, with $a \in K_0$.  Note that, since $p=2$, we have $K_0 = K_1$, so the results of \S\ref{Smain} apply.

By Proposition \ref{Pconductorenough}, in order to determine the higher ramification filtration of the Galois extension $K/K_1$, we need only 
determine the conductor of
each Galois subextension $L$ of $K/K_1$.  Each such subextension can be written in the form $L = K_{c'}(\sqrt[2^{c''}]{a'})$, 
where $c' \geq c''$ and $a' \in K_1$.  
Then Lemma \ref{Lcompositum} implies that $h_{L/K_1} = \max(c'-1, h_{L'/K_1})$, where $L' = K_{c''}(\sqrt[2^{c''}]{a'})$. Since $L'/K_1$ is in 
the same form as our original extension $K/K_1$, 
we content ourselves with finding the conductor $h_{K/K_1}$.  For more details on the structure of subextensions of $K/K_1$, see e.g.
\cite{JV:gg} and \cite{dOV:ls}.

After multiplying $a$ by an element of $(K_1^{\times})^{2^c}$, which does not change the extension, we may assume that
$0 \leq v_{K_1}(a) < 2^c$.  Write $a = 2^nb$, where $v_{K_1}(b) = 0$ and $0 \leq n < 2^c$.
After multiplying again by an element of $(K_1^{\times})^{2^c}$, we may assume that $b \equiv 1 \pmod{2}$.

\begin{theorem}\label{Tmain}
Let $K = K_c(\sqrt[2^c]{a})$, where $a = 2^nb \in K_0 = K_1$.  Assume that $0 \leq n < 2^c$ and $b \equiv 1 \pmod{2}$. 
\begin{enumerate}[(i)]
\item If $n$ is odd, then $h_{K/K_1} = c+1$.
\item Suppose $2|n$.

\begin{enumerate}[(a)]
\item If $c=1$ and $b \equiv 1 \pmod{4}$, then $K=K_1$.
\item If $c=1$ and $b \equiv 3 \pmod{4}$, then $h_{K/K_1} = 1$.  
\item If $4|n$, $c > 1$, and $b \equiv 1 \pmod{4}$, then $h_{K/K_1} = c-1$.
\item If $4|n$, $c > 1$, and $b \equiv 3 \pmod{4}$, then $h_{K/K_1} = c$.
\item If $4 \nmid n$, $c > 1$, and $b \equiv 1 \pmod{4}$, then $h_{K/K_1} = c$.
\item If $4 \nmid n$, $c = 2$, and $b \equiv 3 \pmod{4}$, then $h_{K/K_1} = 1$
\item If $4 \nmid n$, $c > 2$, and $b \equiv 3 \pmod{4}$, then $h_{K/K_1} = c - \frac{1}{2}$.
\end{enumerate} 
\end{enumerate}
\end{theorem}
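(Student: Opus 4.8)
The plan is to reduce every case to one of two computations: either $a$ (after adjusting by a $2^c$th power) is $2$-primitive for $K_1$, in which case Proposition \ref{P2conductor} applies on the nose, or $a$ factors through a compositum of extensions by $2$-primitive elements and roots of unity, which I control with the tower and compositum results of \S\ref{Stowers}. Throughout I use that $K/K_1$ is Galois — the conjugates $\zeta_{2^c}^j\sqrt[2^c]{a}$ of the radical already lie in $K_c(\sqrt[2^c]{a})$ — so that $h_{K/K_1} = h_{K/K_0}$ and the Galois-closure formulas of \S\ref{Smain} compute the conductor directly. Two facts drive the case division: since $e_{K_1} = 1$ and $\tfrac{p}{p-1}e_{K_1} = 2$, a unit $b = 1+t_b$ is $2$-primitive for $K_1$ exactly when $b \equiv 3 \pmod 4$ (so $v_1(t_b)=1$), whereas $b \equiv 1 \pmod 4$ forces $v_1(t_b)\ge 2$ and hence makes $b$ a square in $K_1$ by the criterion in the proof of Lemma \ref{L2extensions}; and $v_1(2^nb)=n$, so $n$ odd makes $a$ itself $2$-primitive.

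For case (i), $n$ odd makes $a$ $2$-primitive for $K_1$ with $t=a-1$ of valuation $0$, and Proposition \ref{P2conductor}(ii) (with $\ell=1$, $v_1(t)=0$) immediately gives $h_{K/K_1}=c+1$. The base cases (a) and (b) have $c=1$ and hence $n=0$, so $a=b$: when $b\equiv 1\pmod 4$ the element $b$ is a square, $\sqrt{b}\in K_1$ and $K=K_1$; when $b\equiv 3\pmod 4$ the extension $K=K_1(\sqrt{b})$ is a $\ints/2$-extension whose conductor is $\tfrac{p}{p-1}e_{K_1}-v_1(t_b)=1$ by Lemma \ref{L2extensions}(ii).

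For the remaining cases I factor the $2$-power part as a square: since $n$ is even, $a=(2^{n/2})^2 b$, so $\sqrt[2^c]{a}=\sqrt[2^{c-1}]{2^{n/2}}\cdot\sqrt[2^c]{b}$ and $K$ sits inside the compositum of $K_c(\sqrt[2^{c-1}]{2^{n/2}})$ and $K_c(\sqrt[2^c]{b})$, the decomposition underlying Corollary \ref{C2specific}. Computing the conductor of each factor over $K_0$ by Proposition \ref{P2conductor} (and $h_{K_c/K_0}=c-1$ by Proposition \ref{Pcyclic}), one finds the unit factor contributes $c$ when $b\equiv 3\pmod 4$ and drops below $c$ after reduction when $b\equiv 1\pmod 4$, while the $2$-power factor contributes $c$ when $4\nmid n$ (root index $c-1$) and drops to $\le c-1$ when $4\mid n$ (root index $\le c-2$). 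In cases (c), (d), (e) exactly one factor strictly dominates, so Lemma \ref{Lcompositumexact} promotes the compositum value to an equality for $K$ itself, yielding $c-1$, $c$, $c$ respectively; in case (c) both factors are $\le c-1$, and the lower bound $h_{K/K_1}\ge h_{K_c/K_1}=c-1$ (invariance under quotients) squeezes the answer to $c-1$.

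The genuine obstacle is case (g) and its $c=2$ boundary, case (f), where the unit factor and the $2$-power factor have the \emph{same} conductor $c$ over $K_0$; here $K$ is a proper subfield of the compositum and Lemma \ref{Lcompositum} only gives the non-tight bound $h_{K/K_1}\le c$. To pin down the exact value $c-\tfrac12$ I pass to the Kummer conductor $h_{K/K_c}$ over the field $K_c\ni\zeta_{2^c}$ and apply Corollary \ref{C2rootsofunity} with $\ell=c$, which converts the half-integer target into the integer condition $h_{K/K_c}=3\cdot 2^{c-2}-1$. The remaining work is to verify this value by reducing $a=2^nb$ modulo $(K_c^\times)^{2^c}$ to a $2$-primitive unit for $K_c$ — using $v_c(2)=2^{c-1}$, $v_c(\zeta_{2^c}-1)=1$, and $\tfrac{p}{p-1}e_{K_c}=2^c$ — and reading off its $t$-valuation via Corollary \ref{C2extensions}. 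This valuation bookkeeping, tracking the unit $2^nb/w^{2^c}$ modulo powers of the uniformizer $\zeta_{2^c}-1$ through the binomial expansion, is where the real calculation lies; the boundary case $c=2$ collapses further (the root index degenerates), so that more of $a$ becomes a $2^c$th power and $h_{K/K_c}$ is smaller, and must be checked by hand to give $h_{K/K_1}=1$ rather than the generic $c-\tfrac12=\tfrac32$.
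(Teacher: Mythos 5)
Your handling of case (i) and cases (iia)--(iie) is correct and essentially the paper's own argument: (i) and the $c=1$ base cases follow from Proposition \ref{P2conductor} and Lemma \ref{L2extensions}(ii); for (c)--(e) you split off the $2$-power part of $a$, compute each factor's conductor via Proposition \ref{P2conductor}(ii), Corollary \ref{C2specific}/\ref{Ccruder} and Proposition \ref{Pcyclic}, and finish with Lemma \ref{Lcompositumexact} (or, in case (c), the squeeze against $h_{K_c/K_1}=c-1$, exactly as the paper does). Your factorization $a=(2^{n/2})^2b$ is a harmless variant of the paper's $L'=K_{c-2}(\sqrt[2^{c-2}]{2^{n/4}})$.

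The genuine gap is in cases (iif) and (iig), which are the heart of the theorem. You correctly convert the target $c-\tfrac12$ into $h_{K/K_c}=3\cdot 2^{c-2}-1$ via Corollary \ref{C2rootsofunity} with $\ell=c$, but then you only announce a plan --- ``reduce $a$ modulo $(K_c^{\times})^{2^c}$ to a $2$-primitive unit for $K_c$ and read off its $t$-valuation'' --- and explicitly defer it (``this is where the real calculation lies''). Worse, as literally stated the plan cannot succeed: if $a$ were congruent mod $(K_c^{\times})^{2^c}$ to a single $2$-primitive $a'=1+t'$, then Proposition \ref{P2conductor}(i) with $\ell=c$ would give $h_{K/K_c}=(c+1)2^{c-1}-v_c(t')$, and since $2$-primitivity forces $v_c(t')<\tfrac{p}{p-1}e_{K_c}=2^c$, this exceeds $(c-1)2^{c-1}$, which is strictly larger than $3\cdot 2^{c-2}-1$ for $c\geq 3$. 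So either $[K:K_c]$ collapses or $a$ splits over $K_c$ into a primitive part times a nontrivial $2^j$th power, and determining which --- i.e., the square-class bookkeeping for $2$ and units in $K_c$ --- is precisely the computation you have not done; your $c=2$ case (f) is likewise only asserted to need a hand check. The paper sidesteps all of this with one identity: $-4=(1+i)^4$, so $a=2^nb=(-2^{n-2}b)\cdot(-4)$ with $4\mid(n-2)$ and $-b\equiv 1\pmod 4$. Hence $K$ lies in the compositum of $L=K_c(\sqrt[2^c]{-2^{n-2}b})$, which falls under case (iic) with $h_{L/K_1}=c-1$, and the Galois closure $L'$ of $K_{\max(2,c-2)}(\sqrt[2^{c-2}]{1+i})$ over $K_1$, where $1+i$ is $2$-primitive for $K_2$ and Proposition \ref{P2conductor}(ii) with $\ell=2$ gives $h_{L'/K_1}=c-\tfrac12$; Lemma \ref{Lcompositumexact} then yields (g), and (f) follows because $-4$ is already a fourth power in $K_2$, so $K\cong K_2(\sqrt[4]{-b})$ reduces to case (iic). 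Until you either supply the $K_c$-side valuation computation in its corrected (compositum) form or find an equivalent of this $(1+i)$ decomposition, cases (f) and (g) --- and with them the theorem --- remain unproved.
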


\begin{proof}
\emph{To (i):} In this case, $a$ is $2$-primitive for $K_1$, so Proposition \ref{P2conductor} shows that
$h_{K/K_1} = c+1$. 
\\
\\
\emph{To (iia):} By \cite[\S0.3]{Ep:wr}, $2^nb$ is a square in $K_1$.
\\
\\
\emph{To (iib):} This follows from Lemma \ref{L2extensions}(ii).
\\
\\
\emph{To (iic):} In this case, $K$ is contained in the compositum of $L := K_c(\sqrt[2^c]{b})$ and 
$L' := K_{c-2}(\sqrt[2^{c-2}]{2^{n/4}})$.  Now, since $b \equiv 1 \pmod{4}$, it follows from \cite[\S0.3]{Ep:wr} that
$b$ is a square in $K_1$.     
So $L = K_c(\sqrt[2^{c-1}]{b'})$, where $b'^2 = b$ and $b' \in K_1$.  Since $b' \equiv 1 \pmod 2$, it follows from Corollary
\ref{Ccruder} that $h_{L/K} \leq \max(c-1, c-1, c-1) = c-1$. 

Also, by Corollary \ref{C2specific}, $h_{L'/K_1} \leq \max(\max(0, c-3), c-1, c-2) = c-1$.
So $h_{K/K_1} \leq h_{LL'/K_1} \leq c-1$, using Lemma \ref{Lcompositum}.  But $K$ contains $K_c$, and by Proposition 
\ref{Pcyclic}, $h_{K_c/K_1} =c-1$.  So $h_{K/K_1} = c-1$.
\\
\\
\emph{To (iid):}  Consider $L$ and $L'$ as in (iic).  
Since $b$ is $2$-primitive for $K_1$, Proposition \ref{P2conductor} gives us that
$h_{L/K_1} = c$.  We have seen in (iic) that $h_{L'/K_1} \leq c-1$.
We conclude using Lemma \ref{Lcompositumexact}, applied to the subextensions $L$, $L'$, and $K$ of $LL'$.
\\
\\
\emph{To (iie):} In this case, $K$ is contained in the compositum of $L := K_c(\sqrt[2^c]{b})$ and $L' := K_{c-1}(\sqrt[2^{c-1}]{2^{n/2}})$.
As in (iic), $h_{L/K_1} = c-1$.  
Also, $2^{n/2}$ is $2$-primitive for $K_1$, so Proposition \ref{P2conductor}(ii) shows that $h_{L'/K_1} = c$.  We conclude by applying Lemma
\ref{Lcompositumexact} to the subextensions $L'$, $L$, and $K$ of $LL'$.
\\
\\ 
\emph{To (iif):} Since $-4$ is a $4$th power in $K_2$, we have that $K \cong K_2(\sqrt[4]{-b})$.  Since $-b \cong 1 \pmod{4}$, the result follows
from (iic).
\\
\\
\emph{To (iig):} Since $-4 = (1+i)^4$ (where $i^2 = -1$), it follows that $K$ is contained in the compositum of $L := K_c(\sqrt[2^c]{-2^{n-2}b})$ and 
the Galois closure $L'$ of $K_{\max(2, c-2)}(\sqrt[2^{c-2}]{1+i})$ over $K_1$.  
By (iic), $h_{L/K_1} = c-1$.  Also, since $1+i$ is $2$-primitive for $K_2$, Proposition \ref{P2conductor}(ii) 
shows that $h_{L'/K_1} = c - \frac{1}{2}$. We conclude using Lemma \ref{Lcompositumexact}, applied to the subextensions $L'$, $L$, and $K$ of $LL'$.
\end{proof}

\end{document}